\def\draft{1}  % 1 for draft version (eg including author notes), 0 for non-draft version
\setlist{nolistsep}
\newcommand{\Znote}[1]{{[\bf Zhenming's Note: #1]}}
\newcommand{\Knote}[1]{{[\bf Kai-Min's Note: #1]}}
\newcommand{\Mnote}[1]{{[\bf Michael's Note: #1]}}
\newcommand{\Hnote}[1]{{[\bf Henry's Note: #1]}}
\newcommand{\Znote}[1]{{}}
\newcommand{\Knote}[1]{{}}
\newcommand{\Mnote}[1]{{}}
\newcommand{\Hnote}[1]{{}}
\newtheorem{claim}{Claim}[section]%
\newtheorem{theorem}{Theorem}[section]
\newtheorem{lemma}[theorem]{Lemma}
\newtheorem{definition}[theorem]{Definition}%%
\newenvironment{example}[1][Example]{\begin{trivlist}
\item[\hskip \labelsep {\bfseries #1}]}{\end{trivlist}}
\newenvironment{remark}[1][Remark]{\begin{trivlist}
\item[\hskip \labelsep {\bfseries #1}]}{\end{trivlist}}
\newcommand{\E}{\mathrm{E}}
\newcommand{\mt}{{{T}}}
\newcommand{\eps}{\varepsilon}
\newcommand{\R}{{\mathbb R}}
\newcommand{\N}{{\mathbb{N}}}
\newcommand{\tM}{{\tilde{M}}}
\newcommand{\zo}{\{0,1\}}
\providecommand{\myparab}[1]{\smallskip\noindent\textbf{#1} }
\title{Chernoff-Hoeffding Bounds for Markov Chains: Generalized and Simplified}
\author{Kai-min Chung \thanks{Computer Science Department, Cornell University. Supported by a Simons Foundation Fellowship. Email: {\tt chung@cs.cornell.edu}.}
  \quad Henry Lam
\thanks{Department of Mathematics and Statistics, Boston University. Email: {\tt khlam@bu.edu}.}
\quad
Zhenming Liu \thanks{Harvard School of Engineering and Applied Sciences. Supported by  NSF grant CCF-0915922. Email: {\tt zliu@eecs.harvard.edu}.}
\quad
Michael Mitzenmacher \thanks{Harvard School of Engineering and Applied Sciences. Supported in part by NSF grants CCF-0915922 and IIS-0964473. Email: {\tt michaelm@eecs.harvard.edu}.} %$\ ^\dag$
}
\date{\today}                                           % Activate to display a given date or no date
\begin{document}

\begin{titlepage}
\maketitle

\begin{abstract}

We prove the first Chernoff-Hoeffding bounds for \emph{general} (\emph{irreversible}) finite-state Markov chains based on the standard $L_1$ (variation distance) \emph{mixing-time} of the chain. Specifically, consider an ergodic Markov chain $M$ and a weight function $f: [n] \rightarrow [0,1]$ on the state space $[n]$ of $M$ with mean $\mu \triangleq \E_{v\leftarrow \pi}[f(v)]$, where $\pi$ is the stationary distribution of $M$. A $t$-step random walk $(v_1,\dots,v_t)$ on $M$ starting from the stationary distribution $\pi$ has expected total weight $\E[X] = \mu t$, where $X \triangleq \sum_{i=1}^t f(v_i)$. Let $\mt$ be the $L_1$ mixing-time of $M$. We show that the probability of $X$ deviating from its mean by a multiplicative factor of $\delta$, i.e., $\Pr \left[ \left|X - \mu t\right| \geq \delta \mu t \right]$, is at most $ \exp(-\Omega\left( \delta^2  \mu  t / \mt\right))$ for $0 \leq \delta \leq 1$, and $ \exp(-\Omega\left( \delta  \mu  t / \mt\right))$ for $\delta >1$. In fact, the bounds hold even if the weight functions $f_i$'s for $i\in [t]$ are distinct, provided that all of them have the same mean $\mu$.

We also obtain a simplified proof for the Chernoff-Hoeffding bounds based on the \emph{spectral expansion} $\lambda$ of $M$, which is the square root of the second largest eigenvalue (in absolute value) of $M\tilde{M}$, where $\tilde{M}$ is the time-reversal Markov chain of $M$. We show that the probability $\Pr \left[ \left|X - \mu t\right| \geq \delta \mu t \right]$ is at most $\exp(-\Omega\left( \delta^2 (1-\lambda) \mu  t \right))$ for $0 \leq \delta \leq 1$, and $\exp(-\Omega\left( \delta (1-\lambda) \mu  t \right))$ for $\delta >1$.

Both of our results extend to continuous time Markov chains, and to the case where the walk starts from an arbitrary distribution $\varphi$, at a price of a multiplicative factor depending on the distribution $\varphi$ in the concentration bounds.

\end{abstract}

\vfill
\textbf{Keywords:} probabilistic analysis, tail bounds, Markov chains
\thispagestyle{empty}
\end{titlepage}

\section{Introduction}

In this work, we establish large deviation bounds for random walks on
general (irreversible) finite state Markov chains based on mixing
properties of the chain in both discrete and continuous time settings.
To introduce our results we focus on the discrete time setting, which
we now describe.

Let $M$ be an ergodic Markov chain with finite state space $V = [n]$
and stationary distribution $\pi$.  Let $(v_1,\dots, v_t)$ denote a
$t$-step random walk on $M$ starting from a distribution $\varphi$ on
$V$. For every $i \in [t]$, let $f_i: V \rightarrow [0,1]$ be a weight
function at step $i$ so that $\E_{v\leftarrow \pi}[f_i(v)]= \mu>0$ for
all $i$.  Define the total weight of the walk $(v_1,\dots, v_t)$ by $X
\triangleq \sum_{i=1}^t f_i(v_i)$. The expected total weight of the
random walk $(v_1,\dots, v_t)$ is $\E[\frac 1 t  X] \approx \mu$
as $t\rightarrow \infty$.

When the $v_i$'s are drawn independently according to the stationary distribution $\pi$, a standard Chernoff-Hoeffding bound says that
{\small
\begin{equation*}
\Pr\left[ \left|X - \mu t\right| \geq \delta  \mu t \right] \leq
\begin{cases}
e^{-\Omega\left( \delta^2  \mu  t\right)}  & \mbox{for $0 \leq \delta \leq 1$,}\\
e^{-\Omega\left( \delta  \mu  t \right)}  & \mbox{for $\delta > 1$.}
\end{cases}
\end{equation*}
}
However, when $(v_1,\dots,v_t)$ is a random walk on a Markov chain
$M$, it is known that the concentration bounds depend inherently on
the mixing properties of $M$, that is the speed at which a random walk
converges toward its stationary distribution.

Variants of Chernoff-Hoeffding bounds for random walk on Markov chains
have been studied in several fields with various
motivations~\cite{Gillman93,Kahale97,LP04,Lezaud04,WX05,Wagner06,Healy06}.
For instance, these bounds are linked to the performance of Markov
chain Monte Carlo integration techniques~\cite{LP04,JS97}. They
have also been applied to various online learning problem~\cite{TL10},
testing properties of a given graph~\cite{GR00}, leader election
problems~\cite{Kahale97}, analyzing the structure of the social
networks~\cite{ART10, MYK10}, understanding the performance of
data structures~\cite{FMM09}, and computational
complexity~\cite{Healy06}. Improving such bounds is therefore of general
interest.

We improve on previous work in two ways.  First, all the existing deviation bounds, as far as we
know, are based on the \emph{spectral expansion} $\lambda(M)$ of the
chain $M$. This spectral expansion $\lambda(M)$ characterizes how much $M$ can
stretch vectors in $\mathbf R^n$ under a normed space defined by the stationary distribution $\pi$, which coincides with the second
largest absolute eigenvalue of $M$ when $M$ is reversible. (A formal definition is deferred to Section~\ref{sec:prelim}.)
The most general result for Markov chains in this form (see,
e.g. \cite{Lezaud04, Wagner06}) is
{\small
\begin{equation} \label{eqn:L-result}
\Pr\left[ \left|X - \mu t\right| \geq \delta  \mu t \right] \leq
\begin{cases}
\|\varphi\|_{\pi} e^{-\Omega\left((1-\lambda) \delta^2  \mu  t\right)}  & \mbox{for $0 \leq \delta \leq 1$,}\\
\|\varphi\|_{\pi} e^{-\Omega\left((1-\lambda) \delta  \mu  t \right)}  & \mbox{for $\delta > 1$.}
\end{cases}
%\Pr\left[ \left|X - \mu t\right| \geq \eps t \right] \leq e^{-\Omega(\eps^2  (1-\lambda)  t)}.
\end{equation}
}
where $\varphi$ is an arbitrary initial distribution and $\|\cdot\|_\pi$ is the $\pi$-norm (which we define formally later).

However, for general irreversible Markov chains, the spectral
expansion $\lambda$ does not directly characterize the mixing time of a chain and thus
may not be a suitable parameter for such bounds. A
Markov chain $M$ could mix rapidly, but have a spectral expansion
$\lambda$ close to 1, in which case Eq. (\ref{eqn:L-result}) does not
yield meaningful bound.  In fact there is a way to modify any given
Markov chain $M$ so that the modified Markov chain $M'$ has
 (asymptotically) the same mixing-time as $M$, but the spectral expansion of $M'$ equals $1$ (Appendix~\ref{asec:missing} gives a detailed construction).
 \iffalse
We believe the following approach is folklore.
We ``split" one of the nodes of an arbitrary Markov chain $M$, say node $a$, into two nodes $a_1$ and $a_2$. Link these two nodes by a directed edge from $a_1$ to $a_2$. All the edges that were directed to $a$ in the original graph now go to $a_1$, while all the outgoing edges from $a$ now go out from $a_2$. Call this modified (necessarily irreversible) Markov chain $M'$. It is not hard to see that $M'$ has comparable mixing time to $M$, but $M'$ has spectral expansion 1.
\fi
It is therefore natural to seek
a Chernoff-type bound for Markov chains directly parameterized by the chain's mixing time $T$.

%Our main result in this paper is a bound that replaces $(1-\lambda)$ in Eq. (\ref{eqn:L-result}) by $\Omega(1/T)$. It is an alternative bound that aims to remedy cases where spectral expansion fails completely, just as the example of the splitted-node chain above (we should note, however, that this bound is not always better than Eq. (\ref{eqn:L-result}); in Section ?? we will provide an example that favors the latter). Moreover, as we will argue, the bound we obtain is optimal among all bounds that use mixing time as the only information from the Markov chain, in the sense that one can find Markov chains that have deviation probabilities matching the $O(1/T)$ factor in the exponent (up to a constant factor). As far as we know, this is the first
%result that shows that mixing time is sufficient to yield these
%types of concentration bounds for general Markov chains.

Second, most previous analyses for deviation bounds such as Eq. (\ref{eqn:L-result}) are based on
\emph{non-elementary} methods such as perturbation theory
\cite{Gillman93, Lezaud04, LP04, WX05}. Kahale~\cite{Kahale97} and
Healy~\cite{Healy06} provided two elementary proofs for reversible
chains, but their results yield weaker bounds than those in Eq. (\ref{eqn:L-result}). Recently, Wagner~\cite{Wagner06} provided
another elementary proof for reversible chains matching the form in
 Eq. (\ref{eqn:L-result}). Together with the technique
of ``reversiblization'' \cite{Fill91,Lezaud04}, Wagner's analysis can be
generalized to irreversible chains. However, his use of decoupling on the linear projections outright arguably leads to a loss of insight; here we provide an approach based on directly tracing the corresponding sequence of linear projections, in the spirit of~\cite{Healy06}. This more elementary approach allows us to tackle both reversible and irreversible chains in a unified manner that avoids the use of ``reversiblization".

%However, such a generalization is
%not straightforward; it does not appear that we can view the two
%approaches as black boxes and glue them together, but instead the
%reversiblization technique has to be integrated more closely with
%Wagner's analysis. Such integration is not only mathematically tedious
%but also may lose useful high level insights. Therefore, here we aim to
%provide a unified deviation bound analysis for both reversible and
%non-reversible chains based solely on elementary methods.

As we describe below, we prove a Chernoff-type bound for general
irreversible Markov chains with general weight functions $f_i$ based
on the standard $L_1$ (variation distance) mixing time of the chain,
using elementary techniques based on extending ideas from
\cite{Healy06}.  The exponents of our bounds are tight up to a
constant factor.   As far as we know, this is the first
result that shows that the mixing time is sufficient to yield these
types of concentration bounds for random walks on Markov chains.
Along the way we provide a unified proof for (\ref{eqn:L-result}) for
both reversible and irreversible chains based only on elementary
analysis.  This proof may be of interest in its own right.

\section{Preliminaries} \label{sec:prelim}
Throughout this paper we shall refer $M$ as the discrete time Markov chain under consideration. Depending on the context, $M$ shall be interpreted as either the chain itself or the corresponding transition matrix (i.e. it is an $n$ by $n$ matrix such that $M_{i, j}$ represents the probability a walk at state $i$ will move to state $j$ in the next step). For the continuous time counterpart,
we write $\Lambda$ as the generator of the chain and let $M(t) = e^{t\Lambda}$, which represents the transition probability matrix from $t_0$ to $t_0 + t$ for an arbitrary $t_0$.

Let $u$ and $w$ be two distributions over the state space $\mathbf V$. The \emph{total variation} distance between $u$ and $w$ is
$\|u - w\|_{TV} = \max_{A \subseteq \mathbf V}\left|\sum_{i \in A}u_i - \sum_{i \in A}w_i\right| = \frac 1 2 ||u - w||_1.$

\iffalse
\Znote{The notation for mixing time is still inconsistent, $\tau$ or $T$?}
\fi
Let $\epsilon > 0$. The mixing time of a \emph{discrete time} Markov chain $M$ is $T(\epsilon) = \min\left\{t:\max_{x}\|xM^t - \pi\|_{TV} \leq \epsilon \right\}$, where $x$ is an arbitrary initial distribution. The mixing time of a \emph{continuous time} Markov chain specified by the generator $\Lambda$ is $T(\epsilon) = \min\left\{t:\max_{x}\|xM(t) - \pi\|_{TV} \leq \epsilon \right\}$, where $M(t) = e^{\Lambda t}$.

 We next define an inner product space specified by the stationary distribution $\pi$:
\begin{definition}[Inner product under $\pi$-kernel] Let $M$ be an ergodic Markov chain with state space $[n]$ and $\pi$ be its stationary distribution. Let $u$ and $v$ be two vectors in $R^n$. The \emph{inner product under the $\pi$-kernel} is
$\langle u, v \rangle_{\pi} = \sum_{x \in [n]}\frac{u_i v_i}{\pi(i)}.$
\end{definition}

We may verify that $\langle \cdot, \cdot \rangle_{\pi}$ indeed forms an inner product space by checking it is symmetric, linear in the first argument, and positive definite.
The $\pi$-norm of a vector $u$ in $R^n$ is $\|u\|_{\pi} = \sqrt{\langle u, u \rangle_{\pi}}$. Note that $\|\pi\|_\pi=1$. For a vector $x \in R^n$, we write $x^{\parallel} = \langle x, \pi \rangle_{\pi} \pi$ for its component along the direction of $\pi$ and $x^{\bot} = x - x^{\parallel}$ for its component perpendicular to $\pi$.

We next define the \emph{spectral norm} of a transition matrix.

\iffalse
The following facts then also immediately hold:
\begin{itemize}
\item \emph{Caucy-Schwarz inequality.} Let $u$ and $v$ be two vectors
in $R^n$. We have
$\langle u, v \rangle^2_{\pi} \leq \|u\|^2_{\pi} \|v\|^2_{\pi}.$
\item \emph{Triangle inequality.} Let $u, v \in \mathbf R^n$. Then we
have
$\|u + v\|_{\pi} \leq \|u\|_{\pi} + \|v\|_{\pi}.$
\item \emph{Pythagorean theorem.} Let $u$ and $v$ be two vectors in \
$\mathbb R^n$ such that $\langle u, v\rangle_{\pi} = 0$. We have
$\|u + v\|^2_{\pi} = \|u\|^2_{\pi} + \|v\|^2_{\pi}.$
\end{itemize}
\fi

\begin{definition}[Spectral norm]\label{def:spectral}Let $M$ the transition matrix of an ergodic Markov chain. Define the spectral norm of $M$ as
$\lambda(M) = \max_{\langle x, \pi \rangle_\pi = 0}\frac{\|xM\|_{\pi}}{\|x\|_{\pi}}.$
\end{definition}

\iffalse
\Mnote{The below does not make sense.  When $M$ is not reversible, the above $\lambda(M)$ is still well-defined;  there's nothing
in the definition that says otherwise.  So I don't understand what you mean when you say
--  we can ``reversiblize'' $M$ to get the corresponding $\lambda(M)$.  You can get $\lambda(M)$ without any reversibilization.
What is it you are trying to say here?  Can you expalin why $R(M)$ is important.}
\Znote{I made an update.}
\fi

When $M$ is clear from the context, we shall simply write $\lambda$ for $\lambda(M)$. We shall also refer $1-\lambda(M)$ as the \emph{spectral gap} of the chain $M$.
In the case when $M$ is reversible, $\lambda(M)$ coincides with the second largest eigenvalue of $M$ (the largest eigenvalue of $M$ is always 1). However, when $M$ is irreversible, such relation does not hold (one hint to realize that  the eigenvalues of $M$ for an irreversible chain can be complex, and the notion of being the second largest may not even be well defined). Nevertheless, we can still connect $\lambda(M)$ with an eigenvalue of a matrix related to $M$. Specifically, let $\tilde M$ be the time reversal of $M$:
$\tilde M(x, y) = \frac{\pi(y)M(y, x)}{\pi(x)}.$
The \emph{multiplicative reversiblization} $R(M)$ of $M$ is
$R(M) \equiv M \tilde M.$ The value of $\lambda(M)$ then coincides with the square root of the second largest eigenvalue of $R(M)$, i.e. $\lambda(M) = \sqrt{\lambda(R(M))}$. Finally, notice that the stationary distribution of $M$, $\tilde M$, and $R$ are all the same. These facts can be found in~\cite{Fill91}.

\section{Chernoff-Hoeffding Bounds for Discrete Time Markov Chains}
We now present our main result formally.

\begin{theorem} \label{thm:mixdeviation} Let $M$ be an ergodic Markov chain with state space $[n]$ and stationary distribution $\pi$. Let $T = T(\epsilon)$ be its $\eps$-mixing time for $\eps \leq 1/8$.  Let $(V_1,\dots, V_t)$ denote a $t$-step random walk on $M$ starting from an initial distribution $\varphi$ on $[n]$, i.e., $V_1 \leftarrow \varphi$. For every $i \in [t]$, let $f_i: [n] \rightarrow [0,1]$ be a weight function at step $i$ such that the expected weight $\E_{v\leftarrow \pi}[f_i(v)] = \mu$ for all $i$. Define the total weight of the walk $(V_1,\dots, V_t)$ by $X \triangleq \sum_{i=1}^t f_i(V_i)$. There exists some constant $c$ (which is independent of $\mu$, $\delta$ and $\epsilon$) such that
{\small
\begin{eqnarray*}
\mbox{1. } \Pr[ X \geq (1+\delta) \mu t] & \leq &
\begin{cases}
c  \|\varphi\|_\pi  \exp\left(-\delta^2   \mu t / (72T)\right) & \mbox{ for $0 \leq \delta \leq 1$} \\
c \|\varphi\|_\pi  \exp\left(-\delta   \mu t/(72T)\right) & \mbox{ for $\delta > 1$}
\end{cases}
\\
\mbox{2. }\Pr[ X \leq (1-\delta) \mu t] & \leq & c  \|\varphi\|_\pi  \exp\left(-\delta^2   \mu t/ (72T)\right) \quad  \quad \mbox{for $0 \leq \delta \leq 1$}
\end{eqnarray*}
}
\end{theorem}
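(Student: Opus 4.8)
The plan is to reduce the Markov-chain tail bound to a bound on the operator norm of a sequence of matrices acting on the inner-product space defined by the $\pi$-kernel, following the Healy-style approach of tracing linear projections directly. The starting point is the standard exponential moment method: for the upper tail, fix a parameter $r \geq 0$ and write
\[
\Pr[X \geq (1+\delta)\mu t] \leq e^{-r(1+\delta)\mu t}\, \E\!\left[e^{rX}\right],
\]
and similarly with $-r$ for the lower tail. The crux is therefore to upper bound $\E[e^{rX}] = \E[\prod_{i=1}^t e^{r f_i(V_i)}]$. I would express this expectation as a matrix product: if $E_i = E_i(r)$ denotes the diagonal matrix with entries $e^{r f_i(x)}$, and $P$ is the transition operator of $M$, then $\E[e^{rX}] = \bd{1}^\top (\prod_{i} E_i P)\, \varphi$ (up to transposition conventions), where $\varphi$ is the initial distribution viewed as a vector. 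The goal is to control this product in the $\pi$-norm.

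**Next I would** carry out the key per-step estimate. Writing any vector $x = x^{\parallel} + x^{\bot}$ relative to $\pi$, I want to show that one step of the operator $E_i P$ (suitably normalized) contracts in a way that trades off between the component along $\pi$ and the perpendicular component. The essential facts are: (i) $P$ itself fixes $\pi$ and, crucially, the $\epsilon$-mixing-time hypothesis means that $M^T$ maps any distribution to within variation distance $\epsilon$ of $\pi$, which translates into a statement that $\|x^\bot M^T\|_\pi$ is small relative to $\|x\|_\pi$ — this is the replacement for the spectral-gap bound $\|xM\|_\pi \leq \lambda \|x\|_\pi$ used in the reversible/spectral case. (ii) The diagonal matrix $E_i$ has entries in $[1, e^r]$, so $E_i = I + (E_i - I)$ where $E_i - I$ has small norm when $r$ is small; expanding $e^{ry} \approx 1 + ry + O(r^2 y^2)$ and using $f_i \in [0,1]$ with $\E_\pi[f_i] = \mu$, the "$ry$" term contributes the drift $r\mu$ along $\pi$ and the perpendicular leakage can be bounded by $\|E_i - I\|$-type quantities. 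The plan is to group the $t$ steps into blocks of length $T$ so that within each block the mixing guarantee can be invoked; across a block, the perpendicular component is damped by a constant factor (from $\epsilon \leq 1/8$) while picking up a controlled multiplicative factor from the $E_i$'s, and the $\pi$-component accumulates the expected drift. Iterating over $t/T$ blocks and optimizing $r$ (taking $r = \Theta(\delta/T)$ for $\delta \leq 1$ and $r = \Theta(1/T)$ for $\delta > 1$, and tracking that $f_i \in [0,1]$ lets $e^{rf_i} \le 1 + (e^r - 1)f_i$) yields the $\exp(-\Omega(\delta^2 \mu t/T))$ and $\exp(-\Omega(\delta \mu t/T))$ bounds, with the leading $\|\varphi\|_\pi$ factor arising from the Cauchy-Schwarz step that passes from the initial distribution $\varphi$ to the all-ones test vector.

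**The main obstacle** I expect is making the block-wise contraction argument precise for \emph{irreversible} chains using only the $L_1$ mixing time rather than a spectral quantity: unlike the spectral case where $\|x^\bot M\|_\pi \leq \lambda\|x^\bot\|_\pi$ holds at every single step, here a single step of $M$ may not contract the perpendicular component at all (indeed $\lambda(M)$ can equal $1$), and one only knows that $T$ steps bring distributions close to $\pi$ in variation distance. Converting that variation-distance statement into a usable bound on $\|x^\bot M^T\|_\pi$ for arbitrary $x^\bot$ — and, more delicately, controlling the interleaved diagonal matrices $E_i$ which do not commute with $P$ and can amplify norms by $e^{rT}$ over a block — requires care: one must show the amplification $e^{O(rT)}$ over a block is dominated by the constant-factor damping of the perpendicular part plus the genuine exponential gain $e^{-r(1+\delta)\mu t}$ from the Markov-inequality prefactor. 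Getting the constants to line up (hence the explicit $72T$ and the $\epsilon \le 1/8$ threshold) is the technical heart; I would handle it by working with a carefully chosen "block transfer operator" $N = E_{(j-1)T+1}P \cdots E_{jT}P$, bounding $\|N x^\bot\|_\pi \leq (\text{const} < 1)\|x\|_\pi$ and $\langle Nx, \pi\rangle_\pi \leq (1 + O(r\mu T))\langle x,\pi\rangle_\pi + O(r\mu T)\|x^\bot\|_\pi$, then setting up and solving the resulting two-dimensional linear recursion on $(\langle x,\pi\rangle_\pi, \|x^\bot\|_\pi)$ across the $t/T$ blocks.
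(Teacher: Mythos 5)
Your proposal differs structurally from the paper's proof: you propose cutting the walk into \emph{consecutive} blocks of length $T$ and analyzing a block transfer operator $E_{(j-1)T+1}P\cdots E_{jT}P$ via a two-dimensional recursion, whereas the paper partitions the walk into $T$ \emph{strided} sub-walks $(V_i, V_{i+T}, V_{i+2T},\dots)$, each of which is a genuine random walk on the chain $N=M^T$, and then handles the dependence between sub-walks at no cost via Jensen's inequality applied to $\E[e^{r\bar X}]$ (Hoeffding's trick). The paper's decomposition lets it invoke a clean spectral Chernoff bound (its Claim 3.2, proved by exactly the Healy-style projection tracing you describe) as a black box on each sub-walk, with the single-step operator being $M^T$; your consecutive-block version forces you to fight the interleaved diagonal matrices inside each block, which is workable in principle (since any stochastic matrix is a non-expansion in $\|\cdot\|_\pi$, the intermediate partial products do not blow up) but is where your $e^{rT}$ amplification worries live and where the constants become painful.

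The genuine gap, however, is the step you yourself flag as the ``main obstacle'' and then do not resolve: converting the $L_1$ (total variation) mixing hypothesis into a bound of the form $\|y M^T\|_\pi \le c\,\|y\|_\pi$ with $c<1$ for \emph{arbitrary} vectors $y\perp\pi$, i.e.\ a bound on $\lambda(M^T)$. The naive route fails quantitatively: TV mixing gives $\|yM^T\|_1\le 2\epsilon\|y\|_1$ for zero-sum $y$ (by splitting $y$ into positive and negative parts and normalizing each to a distribution), but passing between $\|\cdot\|_1$ and $\|\cdot\|_\pi$ in both directions costs a factor $(\min_i\pi_i)^{-1/2}$, which destroys the bound. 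The paper's resolution (its Claim 3.1) is to pass to the multiplicative reversiblization: $\lambda(M^T)=\sqrt{\lambda(M^T\tilde M^T)}$, the reversiblization $R=M^T\tilde M^T$ also has $\epsilon$-mixing time $1$ (since $\tilde M^T$ is an $L_1$-contraction fixing $\pi$), and for a \emph{reversible} chain with mixing time $1$ one bounds the second eigenvalue by $2\epsilon$ by applying the TV-contraction directly to the real eigenvector (scaled so that $v+\pi$ is a distribution), thereby never converting between norms. This yields $\lambda(M^T)\le\sqrt{2\epsilon}\le 1/2$ for $\epsilon\le 1/8$, which is precisely the constant-factor damping your block recursion needs. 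Without this ingredient (or an equivalent one), neither your version nor the paper's goes through, so you should treat it as a required lemma rather than a technicality.
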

\bigskip

Before we continue our analysis, we remark on some aspects of the result.
\bigskip

\myparab{Optimality of the bound} The bound given in Theorem \ref{thm:mixdeviation} is optimal among all bounds based on the mixing time of the Markov chain, in the sense that for any given $T$ and constant $\eps$, one can find a $\delta$, a family of functions $\{f_i:V\to[0,1]\}$, and a Markov chain with mixing time $T(\eps) = T$ that has deviation probabilities matching the exponents displayed in Theorem \ref{thm:mixdeviation}, up to a constant factor. In this regard, the form of our dependency on $T$ is tight for constant $\eps$.
For example, consider the following Markov chain:
\begin{itemize}
\item The chain consists of 2 states $s_1$ and $s_2$.
\item At any time step, with probability $p$ the random walk jumps to the other state and with probability $1-p$ it stays in its current state, where $p$ is determined below.
\item for all $f_i$, we have $f_i(s_1) = 1$ and $f_i(s_2) = 0$.
\end{itemize}
Notice that the stationary distribution is uniform and $T(\epsilon) = \Theta(1/p)$ when $\epsilon$ is a constant. Thus, we shall set $p = \Theta(1/T)$ so that the mixing-time $T(\eps) = T$. Let us consider a walk starting from $s_1$ for sufficiently large length $t$. The probability that the walk stays entirely in $s_1$ up to time $t$ is $(1-p)^t \approx e^{-tp} = \exp(-\Theta(t/T))$. In other words, for $\delta = 1$ we have $\Pr[X \geq (1+\delta)\mu t] =\Pr[X\geq t]=\Pr[\text{the walk stays entirely in $s_1$}]= \exp(-\Theta(t/T(\epsilon)))$. This matches the first bound in Theorem~\ref{thm:mixdeviation} asymptotically, up to a constant factor in the exponent. The second bound can be matched similarly by switching the values of $f_i(\cdot)$ on $s_1$ and $s_2$. Finally, we remark that this example only works for $\epsilon = \Omega(1)$, which is how mixing times appear in the usual contexts. It remains open, though, whether our bounds are still optimal when $\epsilon = o(1)$.
\bigskip

\myparab{Dependency on the threshold $\epsilon$ of the mixing time} Note that the dependence of $\epsilon$ only lies on $T(\epsilon)$. Since $T(\epsilon)$ is non-decreasing in $\epsilon$, it is obvious that $\epsilon=1/8$ gives the best bound in the setting of Theorem~\ref{thm:mixdeviation}. In fact, a more general form of our bound, as will be seen along our derivation later, replaces $1/72$ in the exponent by a factor $(1-\sqrt{2\epsilon})/36$. Hence the optimal choice of $\epsilon$ is the maximizer of $(1-\sqrt{2\epsilon})/T(\epsilon)$ (with $\epsilon<1/2$), which differs for different Markov chains. Such formulation seems to offer incremental improvement and so we choose to focus on the form in Theorem~\ref{thm:mixdeviation}.
\bigskip

\myparab{Comparison with spectral expansion based Chernoff bound}
The bound given in Theorem~\ref{thm:mixdeviation} is \emph{not} always stronger than spectral expansion based Chernoff bounds \eqref{eqn:L-result} that is presented in, for example, Lezaud~\cite{Lezaud04} and Wagner~\cite{Wagner06}. Consider, for instance, a
random constant degree regular graph $G$. One can see that the spectral gap
of the Markov chain induced by a random walk over $G$ is a constant with high probability. On the other hand, the mixing time of the chain is at least $\Omega(\log n)$
because the diameter of a constant degree graph is at least $\Omega(\log n)$.
Lezaud~\cite{Lezaud04} or Wagner~\cite{Wagner06} gives us a concentration bound $\Pr[X \geq (1+\epsilon)\mu t] \leq c\|\varphi\|_{\pi}\exp\left(-\Theta(\delta^2 \mu t)\right)$ when $\delta < 1$ while Theorem~\ref{thm:mixdeviation} gives us $\Pr[X \geq (1+\epsilon)\mu t] \leq c\|\varphi\|_{\pi}\exp\left(-\Theta(\delta^2 \mu t / (\log n))\right)$.
\bigskip

\myparab{Comparison with a union bound} Assuming the spectral expansion based Chernoff bound in Lezaud~\cite{Lezaud04} and Wagner~\cite{Wagner06}, there is a simpler analysis to yield
a mixing time based bound in a similar but weaker form than Theorem \ref{thm:mixdeviation}: we first divide the random walk $(V_1, ..., V_t)$ into $T(\epsilon)$ groups
for a sufficiently small $\epsilon$
such that the $i$th
group consists of the sub-walk $V_i, V_{i + T(\epsilon)}, V_{i + 2T(\epsilon)}, ...$. The walk in each group is then governed
by the Markov chain $M^{T(\epsilon)}$. This Markov chain has unit mixing time and as a result, its spectral expansion can be bounded by a constant (by using our Claim \ref{claim:mixingtospectral} below). Together with a union bound across different groups, we obtain
%Since $T(\epsilon)$ is the mixing time of $M$, $V_{i + T(\epsilon)}$
%is approximately sampled from the stationary distribution regardless of the state $V_i$. Therefore, we may
%view the sub-walk within each group is independently sampled from the stationary distribution. Notice that here we need
%to require $\epsilon$ to be sufficiently small, i.e. $o(1)$, so that the steps in a sub-walk can be viewed as independently
%sampled from $\pi$.
%Finally, by using a Chernoff bound
%for sub-walks in each of the group and a union bound across different groups, we obtain
{\small
\begin{eqnarray}
\mbox{1. } \Pr[ X \geq (1+\delta) \mu t] & \leq &
\begin{cases}
c T  \|\varphi\|_\pi  \exp\left(-\delta^2   \mu t / (72T)\right) & \mbox{ for $0 \leq \delta \leq 1$} \\
c T  \|\varphi\|_\pi  \exp\left(-\delta   \mu t/(72T)\right) & \mbox{ for $\delta > 1$}
\end{cases} \notag
\\
\mbox{2. }\Pr[ X \leq (1-\delta) \mu t] & \leq & c  T  \|\varphi\|_\pi  \exp\left(-\delta^2   \mu t/ (72T)\right) \quad  \quad \mbox{for $0 \leq \delta \leq 1$} \label{eqn:trivial}
\end{eqnarray}}

Theorem~\ref{thm:mixdeviation} shaves off the extra leading factors of
$T$ in these inequalities, which has significant implications. For
example, Eq. (\ref{eqn:trivial}) requires the walk to be at least
$\Omega(T \log T)$, while our bounds address walk lengths between $T$
and $T \log T$. Our tighter bound further can become important when we
need a tighter polynomial tail bound.

As a specific example, saving the factor of $T$ becomes significant when we generalize these bounds to continuous-time chains using the  discretization strategy in Fill~\cite{Fill91} and Lezaud~\cite{Lezaud04}. The strategy is to apply known discrete time bound on the discretized continuous time chain, say in a scale of $b$ units of time, followed by taking limit as $b\to0$ to yield the corresponding continuous time bound. Using this to obtain a continuous analog of Eq. (\ref{eqn:trivial}) does not work, since under the $b$-scaled discretization the mixing time becomes $T/b$, which implies that the leading factor in Eq. (\ref{eqn:trivial}) goes to infinity in the limit as $b\to0$.
\bigskip

%We now briefly review Lezaud's continuous time generalization and see why the extra factor $T(\epsilon)$
%is damaging. Recall that in a continuous time setting, we use $v(\ell)$ to represent the state of the chain at time $\ell$.
%To get a Chernoff type bound for continuous time chain, we discretize the chain in $b$ units, i.e. we only consider the states $v(\ell)$ where $\ell = b, 2b, ..., t$. The discretized walk
%is then governed by the Markov chain $\exp\left(b \Lambda\right)$. The mixing time (parameterized by the number of units) is then $T(\epsilon)/b$. We need to take $b \rightarrow 0$ to get the continuous time bound. When we plug in the trivial bounds, we get
%\begin{eqnarray*}
%\mbox{1. } \Pr[ X \geq (1+\delta)\cdot \mu t] & \leq &
%\begin{cases}
%c\cdot \frac{T(\epsilon)}b \cdot \|\varphi\|_\pi \cdot \exp\left(-\delta^2  \cdot \mu \cdot (t/b) / (72T(\epsilon)/b)\right) & \mbox{ for $0 \leq \delta \leq 1$} \\
%c\cdot \frac{T(\epsilon)}b \cdot \|\varphi\|_\pi \cdot \exp\left(-\delta  \cdot \mu \cdot(t/b)/(72T(\epsilon)/b)\right) & \mbox{ for $\delta > 1$}
%\end{cases}
%\\
%\mbox{2. }\Pr[ X \leq (1-\delta)\cdot \mu t] & \leq & c \cdot \frac{T(\epsilon)} b \cdot \|\varphi\|_\pi \cdot \exp\left(-\delta^2  \cdot \mu \cdot (t/b)/ (72T(\epsilon)/b)\right) \quad  \quad \mbox{for $0 \leq \delta \leq 1$},
%\end{eqnarray*}
%which gives us nothing as $b \rightarrow 0$.

We now proceed to prove Theorem~\ref{thm:mixdeviation}.

\begin{proof} (of Theorem~\ref{thm:mixdeviation})
%For simplicity we let $\epsilon=1/8$ and $T = T(1/8)$; the same argument holds for any
%constant  $\epsilon$.
\iffalse
\Mnote{Why do we need $\epsilon$ constant?  Can we derive a bound for when $\epsilon$
is non-constant for full generality?}\Hnote{added a paragraph above}
\fi
We partition the walk $V_1, ..., V_t$ into $T=T(\epsilon)$ subgroups so that the $i$-th sub-walk consists of the steps $(V_i, V_{i + T}, ...)$. These sub-walks can be viewed as generated from Markov chain $N \triangleq M^T$. Also, denote $X^{(i)} \triangleq \sum_{0 \leq j \leq t / T}f_{i + jT}(V_{i + jT})$ as the total weight for each sub-walk
and $\bar X = \sum_{i = 1}^T X^{(i)}/T$ as the average total weight.
%Up to this point, our approach is similar to the trivial analysis except
%that we only require $\epsilon$ to be a constant instead of $o(1)$.

Next, we follow Hoeffding's approach~\cite{Hoeffding63} to cope with the correlation among the $X^{(i)}$. To start,
{\small
\begin{equation}
\Pr[X \geq (1+\delta)\mu t] = \Pr\left[\bar X \geq (1+\delta) \frac{\mu t}{T} \right] \leq \frac{\E[e^{r\bar X}]}{e^{r(1+\delta)\mu t/T}}. \label{eqn:intermediate1}
\end{equation}
}
Now noting that $\exp(\cdot)$ is a convex function, we use Jensen's inequality to obtain
{\small
\begin{equation}
\E[e^{r\bar X}] \leq \sum_{i \leq T}\frac{1}{T}\E[e^{rX^{(i)}}]. \label{eqn:intermediate2}
\end{equation}}

We shall focus on giving an upper bound on $\E[e^{rX^{(i)}}]$. This requires two steps:
\begin{itemize}
\item First, we show the chain $N$ has a constant spectral gap based on the fact that it takes one step to mix.
\item Second, we appy a bound on the moment generating function of $X^{(k)}$ using its spectral expansion.
\end{itemize}
Specifically, we shall prove the following claims, whose proofs will be deferred to the next two subsections.

\begin{claim}\label{claim:mixingtospectral}Let $M$ be a general ergodic Markov chain with $\epsilon$-mixing time $T(\epsilon)$. We have
$\lambda(M^{T(\epsilon)}) \leq \sqrt{2\epsilon}$.
\end{claim}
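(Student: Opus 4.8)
The plan is to avoid arguing about $\lambda(M^{T(\epsilon)})$ directly and instead to pass to the multiplicative reversibilization, where reversibility lets one replace the operator-norm meaning of $\lambda$ by a bound on individual (real) eigenvalues. Write $N = M^{T(\epsilon)}$. By the facts recalled in Section~\ref{sec:prelim} (from~\cite{Fill91}), $\lambda(N) = \sqrt{\lambda(R(N))}$, where $R(N) = N\tilde N$ is reversible with the same stationary distribution $\pi$; and since $R(N)$ is reversible, $\lambda(R(N))$ is the largest modulus among the eigenvalues of $R(N)$ distinct from $1$. Hence it suffices to prove that every eigenvalue $\beta \neq 1$ of $R(N)$ satisfies $|\beta| \le 2\epsilon$; squaring then gives $\lambda(M^{T(\epsilon)}) \le \sqrt{2\epsilon}$.

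First I would show that $R(N)$ mixes in a single step. For an arbitrary distribution $x$, write $xN = \pi + \eta$, so that $\eta$ is a signed vector with entries summing to $0$ and $\|\eta\|_1 = 2\|xM^{T(\epsilon)} - \pi\|_{TV} \le 2\epsilon$ by definition of $T(\epsilon)$. Since $\pi$ is also stationary for $\tilde N$, we get $xR(N) = (\pi + \eta)\tilde N = \pi + \eta\tilde N$, and because $\tilde N$ is a stochastic matrix it is an $\ell_1$-contraction, so $\|xR(N) - \pi\|_{TV} = \frac12\|\eta\tilde N\|_1 \le \frac12\|\eta\|_1 \le \epsilon$. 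Thus $\max_x \|xR(N) - \pi\|_{TV} \le \epsilon$.

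Next I would bound the eigenvalues of $R(N)$. Let $\beta \neq 1$ be an eigenvalue of the stochastic matrix $R(N)$ with a real left eigenvector $u$, i.e. $uR(N) = \beta u$. Applying both sides to the all-ones vector and using $R(N)\mathbf 1 = \mathbf 1$ gives $(\beta - 1)\sum_i u_i = 0$, hence $\sum_i u_i = 0$. Split $u$ into positive and negative parts and write $u = s(\mu_+ - \mu_-)$, where $\mu_+, \mu_-$ are probability distributions supported on disjoint sets and $s = \frac12\|u\|_1 > 0$. Then $uR(N) = s(\mu_+ R(N) - \mu_- R(N)) = s(\delta_+ - \delta_-)$ with $\delta_\pm = \mu_\pm R(N) - \pi$, and the single-step mixing bound gives $\|\delta_\pm\|_1 \le 2\epsilon$; therefore $\|uR(N)\|_1 \le s(\|\delta_+\|_1 + \|\delta_-\|_1) \le 2\epsilon\|u\|_1$. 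Since $uR(N) = \beta u$, this forces $|\beta| \le 2\epsilon$. Combining the two steps, $\lambda(R(N)) \le 2\epsilon$, so $\lambda(M^{T(\epsilon)}) \le \sqrt{2\epsilon}$.

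The only place that genuinely needs care is the reduction in the first paragraph, and this is where I expect the main obstacle lies. For an irreversible $N$, $\lambda(N)$ is an operator norm in the $\pi$-geometry and is not controlled by the $L_1$ mixing time at all: the $\pi$-norm of a single row deviation $N_{i,\cdot} - \pi$ is a $\chi^2$-type quantity that can be arbitrarily large even when $\|N_{i,\cdot} - \pi\|_{TV}$ is tiny, so one cannot bound $\|xN\|_\pi$ by summing contributions of the rows (a naive attempt, e.g. $\|xN\|_\pi \le \sum_j |x_j|\,\|N_{j,\cdot} - \pi\|_\pi$, is hopeless). Routing through $R(N)$ is exactly what repairs this: $R(N)$ is reversible, so $\lambda(R(N))$ reduces to a bound on scalar eigenvalues, and those succumb to the crude but correct $\ell_1$ argument above.
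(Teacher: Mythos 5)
Your proof is correct and follows essentially the same route as the paper: pass to the multiplicative reversiblization $R = M^{T(\epsilon)}\tilde M^{T(\epsilon)}$ via $\lambda(M^{T(\epsilon)})=\sqrt{\lambda(R)}$, show $R$ mixes in one step because $\tilde M^{T(\epsilon)}$ is an $\ell_1$-contraction, and then bound the nontrivial eigenvalues of $R$ by splitting a zero-sum eigenvector into its positive and negative parts, normalizing each to a distribution, and applying the one-step mixing bound to both. The paper merely packages the last step as a standalone lemma for reversible chains with arbitrary mixing time (Lemma~\ref{mixing-expansion-reversible}, using $\lambda(M^t)=\lambda(M)^t$) before specializing to $R$, which is the same argument you carry out directly in the one-step case.
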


\begin{claim}\label{claim:spectralchernoff} Let $M$ be an ergodic Markov chain with state space $[n]$, stationary distribution $\pi$, and spectral expansion $\lambda = \lambda(M)$.  Let $(V_1,\dots, V_t)$ denote a $t$-step random walk on $M$ starting from an initial distribution $\varphi$ on $[n]$, i.e., $V_1 \leftarrow \varphi$.  For every $i \in [t]$, let $f_i: [n] \rightarrow [0,1]$ be a weight function at step $i$ such that the expected weight $\E_{v\leftarrow \pi}[f_i(v)] = \mu$ for all $i$. Define the total weight of the walk $(V_1,\dots, V_t)$ by $X \triangleq \sum_{i=1}^t f_i(V_i)$.  There exists some constant $c$ and a parameter $r >0$ that depends only on $\lambda$ and $\delta$ such that
{\small
\begin{eqnarray*}
\mbox{1. } \frac{\E[e^{rX}]}{e^{r(1+\delta)\mu t}} & \leq &
\begin{cases}
c\|\varphi\|_\pi \exp\left(-\delta^2 \ (1-\lambda)  \mu t / 36\right) & \mbox{ for $0 \leq \delta \leq 1$} \\
c\|\varphi\|_\pi  \exp\left(-\delta  (1-\lambda)  \mu t/36\right) & \mbox{ for $\delta > 1$.}
\end{cases}
%\\
%&&\mbox{A feasible $r$ satisfying the inequality is $(1-\lambda)\delta/18$ for $0\leq\delta\leq1$ and $(1-\lambda)/18$ for $\delta>1$}.\\
\\
\mbox{2. }\frac{\E[e^{-rX}]}{e^{-r(1-\delta)\mu t}} & \leq & c \|\varphi\|_\pi  \exp\left(-\delta^2  (1-\lambda)  \mu t/ 36\right) \quad  \quad \mbox{for $0 \leq \delta \leq 1$.}
%\\
%&&\mbox{A feasible $r$ satisfying the inequality is $\min\{1/2,\log(1/\lambda)/2,1-\sqrt{\lambda},(1-\lambda)\delta/8\}$}.
\end{eqnarray*}
}
%Furthermore, the choice of $r$ depends only on $\lambda$ and $\delta$, but independent of the initial distribution $\varphi$.
\end{claim}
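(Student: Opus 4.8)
The plan is to bound the moment generating function $\E[e^{rX}]$ directly, by writing it as an iterated matrix product and tracking the evolution of a single vector in the $\pi$-inner-product space, decomposed into its component along $\pi$ and its orthogonal complement. Viewing $\varphi$ as a row vector and letting $E_i = \mathrm{diag}(e^{rf_i(1)},\dots,e^{rf_i(n)})$, we have $\E[e^{rX}] = \varphi E_1 M E_2 M \cdots M E_t \mathbf{1}$, and since $w\mathbf{1} = \langle w,\pi\rangle_\pi$ is precisely the coefficient of $w$ along $\pi$, it suffices to control the $\pi$-coefficient of the vector obtained after applying all the factors. For the partial product $z$ after $i$ of the factors I track the pair $(a,b)$ with $a\ge 0$ its $\pi$-coefficient (nonnegative, since $\varphi$, $M$, and every $E_i$ have nonnegative entries) and $b=\|z^\perp\|_\pi$. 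This is the ``directly tracing the sequence of linear projections'' approach advertised in the introduction, in the spirit of~\cite{Healy06}.

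The core is a single-step estimate. Multiplication by $M$ leaves the $\pi$-coefficient unchanged ($\langle xM,\pi\rangle_\pi = \langle x,\pi\rangle_\pi$) and contracts the orthogonal part, $\|x^\perp M\|_\pi \le \lambda\|x^\perp\|_\pi$, by Definition~\ref{def:spectral}. Writing $E_i = I + B_i$ with $B_i = \mathrm{diag}(e^{rf_i(v)}-1)$, whose diagonal entries lie in $[0,e^r-1]$, convexity of $s\mapsto e^{rs}$ on $[0,1]$ gives $\langle\pi B_i,\pi\rangle_\pi = \E_{v\leftarrow\pi}[e^{rf_i(v)}]-1 \le \mu(e^r-1)$, and $0\le e^{rf_i(v)}-1\le e^r-1$ gives both $\|\pi B_i\|_\pi^2 \le (e^r-1)\big(\E_{v\leftarrow\pi}[e^{rf_i(v)}]-1\big)\le \mu(e^r-1)^2$ and $\|x^\perp B_i\|_\pi \le (e^r-1)\|x^\perp\|_\pi$. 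Setting $\beta\triangleq\mu(e^r-1)$ and $\gamma\triangleq(e^r-1)\sqrt{\mu}$ and bounding the cross-terms between the $\pi$-direction and its complement by Cauchy--Schwarz in $\langle\cdot,\cdot\rangle_\pi$, one obtains the coordinatewise recursion
\[
\binom{a_i}{b_i}\ \le\ \begin{pmatrix} 1+\beta & \lambda\gamma \\ \gamma & \lambda e^r \end{pmatrix}\binom{a_{i-1}}{b_{i-1}},
\]
with an analogous bound at the initialization step giving $a_0 = O(1)$ and $b_0 \le \|\varphi^\perp\|_\pi \le \|\varphi\|_\pi$. Iterating this inequality and comparing $(a_0,b_0)$ to the Perron eigenvector of the $2\times 2$ matrix $A$ yields $\E[e^{rX}] \le c\,\|\varphi\|_\pi\,\rho^{\,t}$, where $\rho$ is the larger eigenvalue of $A$ and $c$ is an absolute constant (for the eventual choice of $r$ the two entries of the Perron eigenvector are comparable, so $c$ does not grow with $t$).

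It remains to bound $\rho$ and optimize over $r$. Computing the larger root of $A$ explicitly and using that $4\lambda\gamma^2 \ll (1+\beta-\lambda e^r)^2$ once $r=O(1-\lambda)$, one gets $\rho \le 1+\beta+\frac{\lambda\gamma^2}{1-\lambda-O(r)} \le \exp\!\big(\mu(e^r-1) + O(\tfrac{\lambda\mu(e^r-1)^2}{1-\lambda})\big)$, hence
\[
\frac{\E[e^{rX}]}{e^{r(1+\delta)\mu t}}\ \le\ c\,\|\varphi\|_\pi\,\exp\!\Big(\mu t\big[(e^r-1)-r(1+\delta)\big] + O\big(\tfrac{\lambda\mu t(e^r-1)^2}{1-\lambda}\big)\Big).
\]
Using $e^r-1-r=O(r^2)$ and $e^r-1=O(r)$ for bounded $r$, the exponent is at most $O(\tfrac{\mu t r^2}{1-\lambda}) - r\delta\mu t$; taking $r=\Theta(\delta(1-\lambda))$ makes this $-\Theta(\delta^2(1-\lambda)\mu t)$ when $0\le\delta\le1$, and capping $r$ at $\Theta(1-\lambda)$ (so the Taylor-type estimates stay valid) gives $-\Theta(\delta(1-\lambda)\mu t)$ when $\delta>1$. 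Tracking the constants explicitly produces the claimed exponent. Part~2 (the lower tail, needed only for $0\le\delta\le1$) follows by running the identical argument with $-r$ in place of $r$: the entries of $B_i=\mathrm{diag}(e^{-rf_i(v)}-1)$ then lie in $[e^{-r}-1,0]$ with magnitude at most $r$, and the same $2\times 2$-matrix analysis goes through. The main obstacle is the single-step estimate together with the ensuing control of $\rho$: one must carefully bound every interaction between the $\pi$-direction and its orthogonal complement created by the diagonal perturbations $E_i$, keep all tracked quantities coordinatewise nonnegative so that iterating the matrix inequality is legitimate, and then extract bounds on $\rho$ and on the conditioning of $A$ clean enough to exponentiate and optimize over $r$; the bookkeeping confining $r$ to the regime where these estimates hold is exactly what forces the cap on $r$, and hence the linear-in-$\delta$ behavior for $\delta>1$.
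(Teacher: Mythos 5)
Your proposal is correct and follows essentially the same route as the paper: the same matrix-product representation of $\E[e^{rX}]$, the same decomposition into the $\pi$-parallel and $\pi$-perpendicular components, the same one-step operator bounds (your estimates on $\langle\pi B_i,\pi\rangle_\pi$, $\|\pi B_i\|_\pi$, and $\|x^\perp B_i\|_\pi$ are exactly the four items of the paper's Lemma~\ref{lem:P-operator}, including the crucial $O(r\sqrt{\mu})$ cross-term), and the same choice $r=\Theta(\min\{\delta,1\}(1-\lambda))$. The only cosmetic difference is that you close the two-variable recursion via the Perron eigenvalue of the $2\times 2$ matrix, whereas the paper unrolls the recurrence explicitly and bounds the resulting product $\prod_i A_i$; both yield the same $\exp\left((e^r-1)\mu t + O(r^2\mu t/(1-\lambda))\right)$ bound.
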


%\begin{claim}\label{claim:spectralchernoff}Let $M$ be a general ergodic Markov chain with spectral expansion $\lambda$. There exists an $r > 0$ such that
%\begin{eqnarray}
%\mbox{1. } \frac{\E[e^{rX}]}{e^{r(1+\delta)\mu t}} & \leq &
%\begin{cases}
%c\|\varphi\|_\pi  \exp\left(-\delta^2  (1-\lambda)  \mu t / 36\right) & \mbox{ for $0 \leq \delta \leq 1$} \\
%c\|\varphi\|_\pi  \exp\left(-\delta  (1-\lambda)  \mu t/36\right) & \mbox{ for $\delta > 1$.}
%\end{cases}
%%\\
%%&&\mbox{A feasible $r$ satisfying the inequality is $(1-\lambda)\delta/18$ for $0\leq\delta\leq1$ and $(1-\lambda)/18$ for $\delta>1$}.\\
%\\
%\mbox{2. }\frac{\E[e^{-rX}]}{e^{-r(1-\delta)\mu t}} & \leq & c \|\varphi\|_\pi  \exp\left(-\delta^2  (1-\lambda)  \mu t/ 36\right) \quad  \quad \mbox{for $0 \leq \delta \leq 1$.}
%%\\
%%&&\mbox{A feasible $r$ satisfying the inequality is $\min\{1/2,\log(1/\lambda)/2,1-\sqrt{\lambda},(1-\lambda)\delta/8\}$}.
%\end{eqnarray}
%\end{claim}

Claim~\ref{claim:mixingtospectral} gives a bound on the spectral expansion of each sub-walk $X^{(i)}$, utilizing the fact that they have unit mixing times. Claim~\ref{claim:spectralchernoff} is a spectral version of Chernoff bounds for Markov chains.  As stated previously, while similar results exist, we provide
our own elementary proof of claim~\ref{claim:spectralchernoff}, both for completeness and because it may be of independent interest.
%We shall discuss and prove these claims in Section~\ref{} and Section~\ref{} respectively.

We now continue the proof assuming these two claims. Using Claim~\ref{claim:mixingtospectral}, we know $\lambda(N) \leq \frac 1 2$. Next,
by Claim~\ref{claim:spectralchernoff}, for the $i$-th sub-walk, we have
{\small
\begin{equation}\label{eqn:bound1}
\frac{\E[e^{rX^{(i)}}]}{e^{r(1+\delta)\mu t/T}}\leq\begin{cases}
c\|\varphi M^i\|_\pi  \exp\left(-\delta^2  \mu t / (72T)\right) & \mbox{ for $0 \leq \delta \leq 1$} \\
c\|\varphi M^i\|_\pi  \exp\left(-\delta    \mu t/(72T)\right) & \mbox{ for $\delta > 1$}
\end{cases}
\end{equation}
}
for an appropriately chosen $r$ (which depends only on $\lambda$ and $\delta$ and hence the same for all $i$). Note that $M^i$ arises because $X^{(i)}$ starts from the distribution $\varphi M^i$. On the other hand, notice that $\|\varphi M^i\|_{\pi}^2=\|\varphi^\parallel M^i\|_{\pi}^2+\|\varphi^\bot M^i\|_{\pi}^2\leq \|\varphi^\parallel\|_{\pi}^2+\lambda^2(M^i)\|\varphi^\bot\|_{\pi}^2\leq\|\varphi\|_{\pi}^2$ (by using Lemma~\ref{lem:M-operator}), or in other words $\|\varphi M^i\|_{\pi}\leq\|\varphi\|_{\pi}$.
\iffalse
Therefore,
\begin{equation}\label{eqn:bound2}
\begin{cases}
c \lambda(M^i)\|\varphi\|_\pi  \exp\left(-\delta^2   \mu t / (72T)\right) & \mbox{ for $0 \leq \delta \leq 1$} \\
c  \lambda(M^i)\|\varphi\|_\pi  \exp\left(-\delta   \mu t/(72T)\right) & \mbox{ for $\delta > 1$}
\end{cases}
\leq
\begin{cases}
c\|\varphi\|_\pi  \exp\left(-\delta^2   \mu t / (72T)\right) & \mbox{ for $0 \leq \delta \leq 1$} \\
c\|\varphi\|_\pi  \exp\left(-\delta   \mu t/(72T)\right) & \mbox{ for $\delta > 1$}
\end{cases}
\end{equation}
\fi
Together with \eqref{eqn:intermediate1} and \eqref{eqn:intermediate2}, we obtain
{\small
$$\Pr[ X \geq (1+\delta) \mu t]  \leq
\begin{cases}
c  \|\varphi\|_\pi  \exp\left(-\delta^2  \mu t / (72T)\right) & \mbox{ for $0 \leq \delta \leq 1$} \\
c \|\varphi\|_\pi  \exp\left(-\delta   \mu t/(72T)\right) & \mbox{ for $\delta > 1$}
\end{cases}$$}
This proves the first half of the theorem. The second case can be proved in a similar manner, namely that
%\begin{align*}
$$
\Pr[X\leq(1-\delta)\mu t]=\Pr\left[\bar{X}\leq\frac{(1-\delta)\mu t}{T}\right]
\leq\frac{\E[e^{-r\bar{X}}]}{e^{-r(1-\delta)\mu t/T}}\leq\sum_{k=1}^{T}\frac{1}{T}\frac{\E[e^{-rX^{(k)}}]}{e^{-r(1-\delta)\mu t/T}}
\leq c  \|\varphi\|_\pi  \exp\left(-\delta^2  \mu t/ (72T)\right)
$$
%\end{align*}
again by Jensen's inequality applied to $\exp(\cdot)$.

\end{proof}

\subsection{Mixing Time v.s. Spectral Expansion}
In this subsection we prove Claim~\ref{claim:mixingtospectral}. We remark that Sinclair~\cite{Sinclair92} presents a similar result for reversible Markov chains: for every parameter $\eps \in (0,1)$,
{\small
\begin{equation} \label{eqn:sinclair-bound}
\frac{1}{2}  \frac{\lambda(M)}{1-\lambda(M)} \log \frac{1}{2\eps} \leq \mt(\eps),
\end{equation}
}
%\footnote{Sinclair~\cite{xxx} mentioned that the bound is the discrete-time version of a result of Aldous~\cite{xxx} (for continuous-time reversible Makrov chains) without giving a proof, but a proof of this bound can be found in~\cite{xxx}. In fact,~\cite{xxx} showed a slightly stronger bound where the constant $1/2$ in the left-hand-side is removed.
where $\mt(\eps)$ is the $\eps$-mixing-time of $M$.
However, in general it is impossible to get a bound on $\lambda(M)$ based on mixing time information for general irreversible chains because a chain $M$ can have $\lambda(M) = 1$ but the $\eps$-mixing-time of $M$ is, say, $\mt(\eps) = 2$ for some constant $\epsilon$ (and $\lambda(M^2)\ll 1$).

In light of this issue, our proof of Claim~\ref{claim:mixingtospectral} depends crucially on the fact that $M^{\mt(\eps)}$ has mixing time 1, which, as we shall see, translates to a bound on its spectral expansion that holds regardless of reversibility.
%We prove Claim~\ref{claim:mixingtospectral} by applying reversiblization to $M^{\mt(\eps)}$ to reduce it to the reversible case, and then applying the bound for reversible Markov chains.
We need the following result on reversible Makrov chains, which is stronger result than Eq. (\ref{eqn:sinclair-bound}) from \cite{Sinclair92}.

\begin{lemma}\label{mixing-expansion-reversible} Let $0 < \eps \leq 1/2$ be a parameter. Let $M$ be an ergodic reversible Markov chain with $\eps$-mixing time $\mt(\eps)$ and spectral expansion $\lambda(M)$. It holds that
$ \lambda(M) \leq (2\eps)^{1/\mt(\eps)}.$
\end{lemma}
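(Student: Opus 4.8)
The plan is to relate the spectral norm $\lambda(M)$ directly to the total variation mixing, exploiting that for a \emph{reversible} chain $\lambda(M)$ is genuinely the second-largest eigenvalue modulus, so powers of $M$ contract perpendicular components by exactly $\lambda(M)^k$. First I would recall the standard fact connecting the $\pi$-norm to total variation: for a distribution $x$, writing $x = \pi + x^{\bot}$, one has $\|x^{\bot}\|_{\pi}$ as an $L_2$-type deviation that dominates (a constant times) $\|x-\pi\|_{TV}$, and conversely if $x = e_y$ is a point mass at $y$ then $\|e_y^{\bot}\|_{\pi}^2 = \|e_y\|_\pi^2 - 1 = 1/\pi(y) - 1$, which can be large. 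The key observation is to run the argument from a \emph{worst-case mixing} perspective: by definition of $\mt = \mt(\eps)$, for every starting state $y$ we have $\|e_y M^{\mt} - \pi\|_{TV} \le \eps$, hence $\|(e_y M^{\mt})^{\bot}\|_{1} \le 2\eps$.

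Next I would turn the $L_1$ bound on $(e_yM^{\mt})^{\bot}$ into an $L_2$ ($\pi$-norm) statement using the reversibility/self-adjointness of $M$ with respect to $\langle\cdot,\cdot\rangle_\pi$. The slick route: consider the symmetric bilinear form and note that $\lambda(M)^{2\mt} = \lambda(M^{2\mt}) $ equals the operator norm of $M^{2\mt}$ restricted to the orthogonal complement of $\pi$, and since $M$ is reversible this equals $\max_{y}\langle e_y M^{2\mt}, e_y\rangle_\pi / \langle e_y^\bot, e_y^\bot\rangle_\pi$-type expression — more precisely, I would use that for reversible $M$, $\|z M^{\mt}\|_\pi^2 = \langle z, z M^{2\mt}\rangle_\pi$ and pick $z = e_y^{\bot}$. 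Then $\lambda^{2\mt}\|e_y^\bot\|_\pi^2 = \|e_y^\bot M^{\mt}\|_\pi^2$, and I bound the right side: $e_y^\bot M^{\mt} = e_y M^{\mt} - \pi = (e_yM^{\mt})^{\bot}$, whose $\pi$-norm I must control by its $L_1$ norm. Here I use $\|v\|_\pi^2 = \sum_i v_i^2/\pi(i)$ together with the elementary bound $\|v\|_\pi \le \|v\|_1 \cdot \max_i |v_i|/\pi(i)$... but that is too lossy, so instead I would argue via $\|(e_yM^{\mt})^{\bot}\|_\pi \le \lambda^{\mt}\|e_y^\bot\|_\pi$ applied \emph{once more} or directly: the cleanest is $\|e_y^\bot M^{\mt}\|_\pi \le \lambda^{\mt} \|e_y^\bot\|_\pi$, combined with a \emph{separate} elementary inequality $\|(e_yM^{\mt})^{\bot}\|_{\pi} \le \sqrt{2\eps}\cdot\|e_y^\bot\|_\pi / (\text{something})$ — i.e. I relate the $\pi$-norm of the residual to $\|\cdot\|_{TV}$ bounded by $\eps$ and the magnitude $\|e_y^\bot\|_\pi$. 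Concretely: since each coordinate of $e_yM^{\mt}$ is a probability, $(e_yM^{\mt})_i \le 1$, so $\|(e_yM^{\mt})^{\bot}\|_\pi^2 = \sum_i ((e_yM^{\mt})_i - \pi_i)^2/\pi_i \le (\max_i|(e_yM^{\mt})_i-\pi_i|/\pi_i)\cdot \|(e_yM^{\mt})^{\bot}\|_1 \le 2\eps \cdot \|e_y^\bot\|_\pi^2$ if one is careful — and then $\lambda^{2\mt}\|e_y^\bot\|_\pi^2 \le 2\eps\,\|e_y^\bot\|_\pi^2$, giving $\lambda^{2\mt} \le 2\eps$, hence $\lambda \le (2\eps)^{1/(2\mt)}$.

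I expect the \emph{main obstacle} is exactly pinning down the inequality that converts the total-variation ($L_1$) smallness of the residual $(e_yM^{\mt})^{\bot}$ into the right $\pi$-norm ($L_2$-weighted) bound without losing a factor of $\pi_{\min}$; the honest way is to observe $\|e_y^\bot M^{\mt}\|_\pi^2 = \langle e_y^\bot M^{\mt}, e_y^\bot M^{\mt}\rangle_\pi = \langle e_y^\bot, e_y^\bot M^{2\mt}\rangle_\pi \le \|e_y^\bot\|_\pi \cdot \|e_y^\bot M^{2\mt}\|_\pi$ and to bound $\|e_y^\bot M^{2\mt}\|_\pi$ by noticing $e_y^\bot M^{2\mt} = (e_y M^{2\mt})^\bot$ with coordinates bounded by $1$ in absolute value and $L_1$-norm at most $2\eps$ (mixing for time $2\mt \ge \mt$), so $\|(e_yM^{2\mt})^\bot\|_\pi^2 \le \sum_i |(e_yM^{2\mt})_i - \pi_i|\cdot|(e_yM^{2\mt})_i-\pi_i|/\pi_i$; using $|(e_yM^{2\mt})_i - \pi_i| \le 1$ only helps if divided by $\pi_i$ stays bounded. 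The resolution the authors surely intend: maximize the Rayleigh-type quotient over $z$ with $\langle z,z\rangle_\pi$ \emph{normalized} and unit point mass structure, so that $\|e_y^\bot\|_\pi$ cancels on both sides as above. Once that cancellation is arranged, the bound $\lambda(M)^{\mt} \le \sqrt{2\eps}$, i.e. $\lambda(M) \le (2\eps)^{1/\mt}$, follows; note this is slightly stronger than the stated $(2\eps)^{1/\mt(\eps)}$ would need (it even gives the square-root improvement), so if the cleanest argument only yields $\lambda^{\mt} \le$ (const)$\cdot\sqrt{\eps}$ I would re-examine whether iterating the contraction $k$ times and letting the residual decay geometrically recovers the exact exponent $1/\mt(\eps)$ claimed. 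I would finish by noting reversibility is used only to make $M$ self-adjoint in $\langle\cdot,\cdot\rangle_\pi$ so that $\|zM^{\mt}\|_\pi^2 = \langle z, zM^{2\mt}\rangle_\pi$ and so that $\lambda(M^{\mt}) = \lambda(M)^{\mt}$; for the irreversible case of Claim~\ref{claim:mixingtospectral} one instead passes to $R(M)=M\tilde M$, which is where the $\sqrt{2\eps}$ (rather than $2\eps$) shows up.
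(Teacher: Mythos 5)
Your proposal has a genuine gap, and it is exactly the one you flag yourself: the conversion from the $L_1$ (total variation) smallness of $(e_yM^{\mt})^{\bot}$ to a $\pi$-norm bound. The inequality you propose, $\|(e_yM^{\mt})^{\bot}\|_\pi^2 \le \bigl(\max_i|(e_yM^{\mt})_i-\pi_i|/\pi_i\bigr)\cdot\|(e_yM^{\mt})^{\bot}\|_1 \le 2\eps\,\|e_y^{\bot}\|_\pi^2$, does not hold as written: the factor $\max_i|(e_yM^{\mt})_i-\pi_i|/\pi_i$ can be as large as $1/\pi_{\min}$ and is attained at a state unrelated to $y$, whereas $\|e_y^{\bot}\|_\pi^2 = 1/\pi(y)-1$; there is no reason for the former to be dominated by $2\eps$ times the latter. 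Your fallback (``maximize the Rayleigh quotient so that $\|e_y^{\bot}\|_\pi$ cancels'') presupposes the very inequality you could not establish. Separately, your exponent bookkeeping is off: $\lambda^{2\mt}\le 2\eps$ gives $\lambda\le(2\eps)^{1/(2\mt)}$, which for $2\eps<1$ is a \emph{weaker} bound than the claimed $(2\eps)^{1/\mt}$, not a stronger one.

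The paper avoids the $L_1$-to-$L_2$ conversion entirely by not using point masses. Since $M$ is reversible, $\lambda(M^{\mt})=\lambda(M)^{\mt}$ and $\lambda(M^{\mt})$ is the second largest eigenvalue in absolute value of $M^{\mt}$, with a real left eigenvector $v$ satisfying $\sum_i v_i=0$. Scaling $v$ so that $x\triangleq v+\pi$ is a probability distribution, one has $xM^{\mt}-\pi=\lambda(M^{\mt})\,v$ and $x-\pi=v$: the residual after one application of $M^{\mt}$ is an exact scalar multiple of the initial residual, so comparing $L_1$ norms alone yields $\lambda(M^{\mt})\|v\|_1\le 2\eps\|v\|_1$. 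The only other ingredient is a contraction version of the mixing-time definition (the paper's Claim~3.5): $\|xM^{\mt}-\pi\|_{TV}\le 2\eps\|x-\pi\|_{TV}$ for every distribution $x$, proved by splitting $x-\pi$ into its positive and negative parts, normalizing each to a distribution, and applying the definition of $\mt(\eps)$ to each. If you want to salvage your approach, you should replace the point mass $e_y$ by this eigenvector; the self-adjointness identity $\|zM^{\mt}\|_\pi^2=\langle z,zM^{2\mt}\rangle_\pi$ then becomes unnecessary.
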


We remark that it appears possible to prove Lemma~\ref{mixing-expansion-reversible} by adopting an analysis similar to Aldous'~\cite{Aldous82}, who addressed the continuous time case.  We present an alternative proof that is arguably simpler; in particular, our proof does not use the spectral representation theorem as used in~\cite{Aldous82} and does not involve arguments that take the number of steps to infinity.

\begin{proof} (of Lemma~\ref{mixing-expansion-reversible}) Recall that for an ergodic reversible Markov chain $M$, it holds that $\lambda(M^t) = \lambda^t(M)$ for every $t \in \N$. Hence, it suffices to show that $\lambda(M^{T(\epsilon)}) \leq 2\epsilon$. Also, recall that $\lambda(M^{T(\epsilon)})$ is simply the second largest eigenvalue (in absolute value) of $M^{T(\epsilon)}$. Let $v$ be the corresponding eigenvector, i.e. $v$ satisfies $vM^{T(\epsilon)} = \lambda(M^{T(\epsilon)}) v$. Since $M$ is reversible, the entries of $v$ are real-valued. Also, notice that $v$ is a left eigenvector of $M$ while $(1, 1, ..., 1)^T$ is a right eigenvector of $M$ (using the fact that each row of $M$ sums to one). Furthermore, $v$ and $(1, ..., 1)^T$ do not share the same eigenvalue. So we have $\langle v, (1, ..., 1)^T \rangle = 0$
%(See Lecture 23 in \cite{BG11} for an explanation)
, i.e. $\sum_{i}v_i = 0$.
%\Mnote{I agree that it's well known;  no reference is needed!}
Therefore, by scaling $v$, we can assume w.l.o.g. that $x \triangleq v + \pi$ is a distribution. We have the following claim.

\begin{claim}\label{claim:mixspeedup} Let $x$ be an arbitrary initial distribution. Let $M$ be an ergodic Markov chain with stationary distribution $\pi$ and mixing time $T(\epsilon)$. We have
$\|xM^{T(\epsilon)} - \pi \|_{TV} \leq 2\epsilon \|x - \pi \|_{TV}.$
\end{claim}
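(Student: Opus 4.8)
\textbf{Proof proposal for Claim~\ref{claim:mixspeedup}.}
The plan is to exploit linearity of the map $y \mapsto yM^{T(\epsilon)}$ together with the fact that $M^{T(\epsilon)}$ drives \emph{every} starting distribution to within $\epsilon$ of $\pi$ in total variation. First I would reduce to the case $d \triangleq \|x - \pi\|_{TV} > 0$ (otherwise $x = \pi$ and the statement is trivial, as $\pi M^{T(\epsilon)} = \pi$). Write $T = T(\epsilon)$ for brevity.

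The key step is a standard decomposition of the signed vector $x - \pi$ into its positive and negative parts. Since $\sum_i (x_i - \pi_i) = 0$, the total positive mass $\sum_{i:\, x_i > \pi_i}(x_i - \pi_i)$ equals the total negative mass $\sum_{i:\, x_i < \pi_i}(\pi_i - x_i)$, and this common value is exactly $\tfrac12 \sum_i |x_i - \pi_i| = \|x - \pi\|_{TV} = d$. Hence, setting $p_i = (x_i - \pi_i)^+/d$ and $q_i = (x_i - \pi_i)^-/d$, both $p$ and $q$ are genuine probability distributions on $[n]$, and $x - \pi = d\,(p - q)$.

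Now I would apply $M^T$ on the right: using $\pi M^T = \pi$ and linearity,
\[
x M^T - \pi = (x-\pi)M^T = d\,(pM^T - qM^T).
\]
Since $pM^T$ and $qM^T$ are distributions obtained by running the walk for $T$ steps from the initial distributions $p$ and $q$ respectively, the definition of the mixing time gives $\|pM^T - \pi\|_{TV} \le \epsilon$ and $\|qM^T - \pi\|_{TV} \le \epsilon$. By the triangle inequality for $\|\cdot\|_{TV}$,
\[
\|pM^T - qM^T\|_{TV} \le \|pM^T - \pi\|_{TV} + \|\pi - qM^T\|_{TV} \le 2\epsilon .
\]
Combining the last two displays yields $\|xM^T - \pi\|_{TV} = d\,\|pM^T - qM^T\|_{TV} \le 2\epsilon\, d = 2\epsilon\,\|x-\pi\|_{TV}$, which is the claim.

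I do not anticipate a real obstacle here: the only point requiring a moment of care is verifying that the normalizing constant in the positive/negative decomposition is precisely $\|x-\pi\|_{TV}$ (so that $p,q$ are probability vectors and the factor that comes out is exactly $d$), and that $\|\cdot\|_{TV}$ is homogeneous and satisfies the triangle inequality — both of which are immediate from $\|u-w\|_{TV} = \tfrac12\|u-w\|_1$. Note also that the argument nowhere uses reversibility, so the claim holds for general ergodic $M$, which is exactly what is needed to feed it back into the proof of Claim~\ref{claim:mixingtospectral}.
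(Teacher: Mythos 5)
Your proof is correct and follows essentially the same route as the paper's: decompose $x-\pi$ into normalized positive and negative parts (which are probability distributions), apply the definition of the mixing time to each, and finish with the triangle inequality. The only cosmetic difference is that you phrase everything in terms of $\|\cdot\|_{TV}$ while the paper passes through the $L_1$ norm explicitly.
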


\begin{proof} (of Claim~\ref{claim:mixspeedup}) The inequality holds trivially when $x=\pi$. Let $x\neq\pi$ be an arbitrary distribution on $M$, $\delta \triangleq \|x - \pi \|_{TV}>0$,
and $y \triangleq x - \pi$. We decompose $y$ into a positive component $y^+$ and a negative component $y^-$ by
{\small
\begin{equation*}
y^+_i =
\begin{cases}
y_i & \mbox{if $y_i \geq 0$} \\
0 & \mbox{o.w.}
\end{cases}
\mbox{ and } \quad
y^-_i =
\begin{cases}
0 & \mbox{if $y_i \geq 0$} \\
-y_i & \mbox{o.w.}
\end{cases}
\end{equation*}
}
Note that by definition, $\sum_i y^+_i = \sum_i y^-_i = \delta$. We define $z^+ = y^+ / \delta$ and $z^- = y^- / \delta$.  Observe that $z^+$ and $z^-$ are distributions. By the definition of $\eps$-mixing time, we have
$$ \| z^+ M^{\mt(\eps)} - \pi \|_{TV} \leq \eps, \mbox{ and }\quad \| z^- M^{\mt(\eps)} - \pi \|_{TV} \leq \eps,$$
or equivalently,
$ \| z^+ M^{\mt(\eps)} - \pi \|_1 \leq 2\eps$  and  $\| z^- M^{\mt(\eps)} - \pi \|_1 \leq 2\eps$.
Now, we are ready to bound the statistical distance $\|xM^{\mt(\eps)} - \pi \|_{TV}$ as follows.
\begin{eqnarray*}
\|xM^{\mt(\eps)} - \pi \|_{TV} & = & (1/2) \|xM^{\mt(\eps)} - \pi \|_{1}  \\
%& = & \frac{1}{2}  \|xM^{\mt(\eps)} - \pi \|_{1} \\
& = & (1/2) \|(x - \pi )M^{\mt(\eps)} \|_{1} \\
& = & (1/2) \|(y^+ - y^-)M^{\mt(\eps)} \|_{1} \\
& = & (1/2) \|\delta z^+M^{\mt(\eps)} - \delta z^-M^{\mt(\eps)} \|_{1} \\
& = & (\delta/2) \|(z^+M^{\mt(\eps)} - \pi) - ( z^-M^{\mt(\eps)} - \pi)\|_{1} \\
& \leq & (\delta/2) \left( \|(z^+M^{\mt(\eps)} - \pi) \|_1  + \| ( z^-M^{\mt(\eps)} - \pi)\|_{1} \right)
 \leq  2\eps\delta.
\end{eqnarray*}
\end{proof}
We now continue to prove Lemma~\ref{mixing-expansion-reversible}. By Claim~\ref{claim:mixspeedup},
$ \|xM^{\mt(\eps)} - \pi \|_{TV} \leq 2\eps  \|x - \pi \|_{TV}$, i.e.
$\|xM^{\mt(\eps)} - \pi \|_{1} \leq 2\eps  \|x - \pi \|_{1}.$
Observing that $(xM^{\mt(\eps)} - \pi)$ and $(x - \pi)$ are simply $\lambda(M^{\mt(\eps)}) v$ and $v$, the above inequality means
$\lambda(M^{\mt(\eps)}) \|v\|_1 \leq 2\eps  \|v \|_1,$
which implies $\lambda(M^{\mt(\eps)}) \leq 2\eps,$ as desired.
\end{proof}

We are now ready to prove our main claim.

\begin{proof}{(of Claim~\ref{claim:mixingtospectral})} The idea is to reduce to the reversible case by considering the reversiblization of $M^{\mt(\eps)}$. Let $\tM^{\mt(\eps)}$ be the time reversal of $M^{\mt(\eps)}$, and $R \triangleq M^{\mt(\eps)}\tM^{\mt(\eps)}$ be the reversiblization of $M^{\mt(\eps)}$. By Claim~\ref{claim:mixingtospectral},
$\lambda(M^{\mt(\eps)}) = \sqrt{\lambda(R)}.$
Let us recall (from Section~\ref{sec:prelim}) that $M$, $M^{T(\eps)}$, and $\tM^{\mt(\eps)}$ all share the same stationary distribution $\pi$.
Next, we claim that the $\eps$-mixing-time of $R$ is $1$. This is because $\|\varphi M^{T(\eps)}\tM^{\mt(\eps)}-\pi\|_{TV}\leq \|\varphi M^{T(\eps)}-\pi\|_{TV}\leq\eps$, where the second inequality uses the definition of $T(\eps)$ and the first inequality holds since any Markov transition is a contraction mapping: for any Markov transition, say $S=(s(i,j))$, and any vector $x$, $\|xS\|_1=\sum_j|\sum_ix_is(i,j)|\leq\sum_j\sum_i|x_i|s(i,j)=\sum_i|x_i|=\|x\|_1$; putting $x=\varphi M^{T(\eps)}-\pi$ and $S=\tM^{\mt(\eps)}$ gives the first inequality.
 %since a random walk can never increase the statistical distance to the stationary distribution $\pi$ \Znote{Give a citation for this claim}. More precisely, for every distribution $x$, we have the distribution $xM^{\mt(\eps)}$ is $\eps$-close to $\pi$, which implies $xR = xM^{\mt(\eps)}\tM^{\mt(\eps)}$ is also $\eps$-close to $\pi$.
 Now, by Lemma~\ref{mixing-expansion-reversible}, $\lambda(R) \leq 2\eps$, and hence
$\lambda(M^{\mt(\eps)}) = \sqrt{\lambda(R)} \leq \sqrt{2\eps},$
as desired.
\end{proof}

\subsection{Bounding the Moment Generating Function}\label{sec:boundmgf}
We now prove Claim~\ref{claim:spectralchernoff}. We focus on the first inequality in the claim; the derivation of the second inequality is similar and is deferred to Appendix~\ref{asec:less}.

Claim~\ref{claim:spectralchernoff} leads directly to a spectral version of the
Chernoff bound for Markov chains. Lezaud~\cite{Lezaud04} and
Wagner~\cite{Wagner06} give similar results for the case where $f_i$
are the same for all $i$. The analysis of~\cite{Wagner06} in particular can be extended
to the case where the functions $f_i$ are different. Here we
present an alternative analysis and along the way will discuss the merit of our approach compared to the previous proofs.

Recall that we define $X = \sum_{i = 1}^tf_i(V_i)$. We start with the following observation, which
has been used previously \cite{Healy06, Lezaud04, Wagner06}.

\begin{equation}\label{eqn:walkprob}
\E[e^{rX}] = \|\varphi P_1 M P_2 ... M P_t\|_1,
\end{equation}
where the $P_i$ are diagonal matrices with diagonal entries $(P_i)_{j,j} \triangleq e^{rf_i(j)}$ for $j \in [n]$. One can verify this fact by observing that each walk $V_1,\ldots,V_t$ is assigned the corresponding probability in the product of $M$'s with the appropriate weight $e^{r\sum_if_i(V_i)}$.
 %every non-zero cross-term in this matrix product corresponds to exactly one walk $V_1, ..., V_t$ on $M$ and each such term is exactly the probability of the walk times $e^{\sum_if_i(V_i)}$.

For ease of exposition, let us assume $P_i$ are all the same at this moment. Let $P = P_1 = ... = P_t$, then (\ref{eqn:walkprob}) becomes $\|\varphi (PM)^{t - 1}P\|_1 = \langle \varphi (PM)^{t-1}P, \pi \rangle_{\pi}= \langle \varphi (PM)^t, \pi \rangle_{\pi}=\|\varphi (PM)^t\|_1$ (see Lemma~\ref{lem:M-operator} below). Up to this point, our analysis is similar to previous work \cite{Gillman93, Lezaud04, Healy06, Wagner06}. Now there are two natural possible ways of bounding $\|\varphi (PM)^t\|_1=\langle \varphi (PM)^t, \pi \rangle_{\pi}$.

\begin{itemize}
\item \textbf{Approach 1. Bounding the spectral norm of the matrix $PM$.} In this approach, we observe that $\langle \varphi (PM)^t, \pi \rangle_{\pi} \leq \|\varphi\|_{\pi} \|PM\|_\pi^t$ where $\|PM\|_\pi$ is the operator norm of the matrix $PM$ induced by $\|\cdot\|_\pi$ (see, for example, the proof of Theorem 1 in \cite{Wagner06}). This method decouples the effect of each $PM$ as well as the initial distribution. When $M$ is reversible, $\|PM\|_\pi$ can be bounded through Kato's spectral perturbation theory~\cite{Gillman93,Lezaud04,LP04}. Alternatively, Wagner~\cite{Wagner06} tackles the variational description of $\|PM\|_\pi$ directly, using only elementary techniques, whose analysis can be generalized to irreversible chains.

%     connects to $\lambda(PM)$. In order to generalize the result to irreversible chains, one has to use an idea similar to the reversiblization. Specifically, let $M^*$ be be the time reversal of $M$. Then we have .
%\begin{equation}\label{eqn:decouple}
%\langle x (PM)^t, \pi \rangle_{\pi} \leq \|x\|_{\pi}\cdot \lambda\left((PM)^t\right) \quad\mbox{ and }\quad\lambda\left((PM)^t\right) = \lambda^{1/2}\left((PM^*)^t\cdot (PM)^t\right).
%\end{equation}
%
%Finally, by using Marcus' theorem~\cite{}, we have $\lambda\left((PM^*)^t\cdot (PM)^t\right) \leq \lambda^t\left((PM^*)\cdot (PM)\right)$. We may then use results regarding reducible chains to bound $\lambda\left((PM^*)\cdot (PM)\right)$.

\item \textbf{Approach 2. Inductively giving a bound for $x(PM)^i$ for all $i \leq t$.} In this approach, we do not decouple the product $\varphi (PM)^t$. Instead, we trace the change of the vector $\varphi(PM)^i$ for each $i \leq t$. As far as we know, only Healy~\cite{Healy06} adopts this approach and his analysis is restricted to regular graphs, where the stationary distribution is uniform. His analysis also does not require perturbation theory.
\end{itemize}

Our proof here generalizes the second approach to any ergodic chains by only using elementary methods. We believe this analysis is more straightforward for the following reasons. First, directly tracing the change of the vector $\varphi(PM)^i$ for each step keeps the geometric insight that would otherwise be lost in the decoupling analysis as in~\cite{Lezaud04, Wagner06}. Second, our analysis studies both the reversible and irreversible chains in a unified manner. We \emph{do not} use the reversiblization technique to address the case for irreversible chains. While the reversiblization technique is a powerful tool to translate an irreversible Markov chain problem into a reversible chain problem, this technique operates in a blackbox manner; proofs based on this technique do not enable us to directly measure the effect of the operator $PM$.

%\footnote{We remark, though, it seems possible to modify Wagner's analysis to circumvent the reversiblization technique but doing so requires considerable effort.}.

We now continue our analysis by using a framework similar to the one presented by Healy~\cite{Healy06}. We remind the reader that we no longer assume $P_i$'s are the same. Also, recall that $\E[e^{rX}] = \|\varphi P_1MP_2...MP_t\|_1=\langle \varphi P_1MP_2...MP_t,\pi \rangle_{\pi} = \|(\varphi P_1MP_2...MP_t)^{\parallel}\|_\pi.$
Let us briefly review the strategy from \cite{Healy06}.

\iffalse
\Znote{Do we need to emphasize the weakness of Alex's multiplicative bound? We used to have a discussion on it but since our result is not the first multiplicative Chernoff bound anymore, I don't find it necessary to carry out the discussion anymore.}
\fi

\begin{itemize}
\item First, we observe that an arbitrary vector $x$ in $\R^n$ can be decomposed into its \emph{parallel} component
    (with respect to $\pi$) $x^{\parallel} = \langle x, \pi \rangle \pi$ %(the factor $n$ is a normalization factor)
    \iffalse
    \Znote{We originally wrote $x^{\parallel} = n \cdot (x \cdot \pi) \cdot \pi$. Michael suggested that this is a bad example of using cdot. I also suggest we shall consistently use $\langle x, y \rangle$ instead of $x \cdot y$ to denote the inner product. }
    \Znote{This $x$ is different from the $x$ above. The $x$ above represents the initial distribution and shall be changed to $\varphi$ in next iteration. Here we do not need to change $x$.}
    \fi
    and the \emph{perpendicular} component $x^{\perp} = x - x^{\parallel}$ in the $L_\pi$ space. This decomposition helps tracing the difference (in terms of the norm) between each pair of $\varphi P_1M...P_iM$ and $\varphi P_1M...P_{i + 1}M$ for $i \leq t$, i.e. two consecutive steps of the random walk. For this purpose, we need to understand the \emph{effects of the linear operators} $M$ and $P_i$ when they are applied to an arbitrary vector.
\item Second, after we compute the difference between each pair $xP_1M...P_{i}M$ and $xP_1M...P_{i + 1}M$, we set up a \emph{recursive relation}, the solution of which yields the Chernoff bound.
\end{itemize}

%We now extend Healy's analysis to cover the case for irreversible chains. The key improvement is made by operating on the space with $\pi$-kernel instead of $L_2$.
\iffalse
We note that
$$\E[e^{rX}] = \|\varphi P_1MP_2...MP_t\|_1=\langle \varphi P_1MP_2...MP_t,\pi \rangle_{\pi} = \|(\varphi P_1MP_2...MP_t)^{\parallel}\|_\pi.$$
\fi

% MM: This paragraph seems entirely redundant so I am removing it.
% Our analysis follows Healy's framework and also consists of two components. First, we need to understand the effects of the operators $M$ and $P_i$ when they are applied to an arbitrary vector. Second, we need to set up and solve a recursive relation regarding the vectors $xP_1M...P_iM$ after we understand $M$ and $P_i$'s effects.

We now follow this step step framework to prove Claim~\ref{claim:spectralchernoff}

\myparab{The effects of the $M$ and $P_i$ operators} Our way of tracing the vector $\varphi P_1MP_2...MP_t$ relies on the following two lemmas.

\begin{lemma}{\bf(The effect of the $M$ operator)} \label{lem:M-operator} Let $M$ be an ergodic Markov chain with state space $[n]$, stationary distribution $\pi$, and spectral expansion $\lambda = \lambda(M)$.  Then
\begin{enumerate}
\item $\pi M = \pi$.
\item For every vector $y$ with $y \bot \pi$, we have $yM \bot \pi$ and $\|yM \|_\pi \leq \lambda \|y \|_\pi$.
\end{enumerate}
\end{lemma}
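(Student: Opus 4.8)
The plan is to reduce both parts to two elementary facts: the defining property of the stationary distribution, and the observation that the $\pi$-inner product against $\pi$ is simply the coordinate sum. Concretely, for any $x \in \R^n$ we have $\langle x, \pi \rangle_\pi = \sum_{i \in [n]} \frac{x_i\,\pi(i)}{\pi(i)} = \sum_{i} x_i$; in particular $x \bot \pi$ is equivalent to $\sum_i x_i = 0$, and $\langle \pi, \pi \rangle_\pi = \sum_i \pi(i) = 1$ (re-deriving $\|\pi\|_\pi = 1$). I would record this identity at the outset, since the entire argument rests on it.

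Part 1 is immediate: $\pi M = \pi$ is exactly the assertion that $\pi$ is the stationary distribution of $M$, which is part of the hypothesis.

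For Part 2, take any $y$ with $y \bot \pi$, i.e.\ $\sum_i y_i = 0$. To see $yM \bot \pi$, compute
$$\langle yM, \pi \rangle_\pi = \sum_j (yM)_j = \sum_j \sum_i y_i M_{i,j} = \sum_i y_i \Big( \sum_j M_{i,j} \Big) = \sum_i y_i = 0,$$
where the fourth equality uses that every row of the stochastic matrix $M$ sums to $1$. For the norm inequality, the case $y = 0$ is trivial; otherwise $\|y\|_\pi > 0$ by positive-definiteness of $\langle \cdot, \cdot \rangle_\pi$, and since $y$ lies in the subspace $\{x : \langle x, \pi \rangle_\pi = 0\}$ over which Definition~\ref{def:spectral} takes the maximum defining $\lambda(M)$, we obtain $\|yM\|_\pi / \|y\|_\pi \le \lambda(M) = \lambda$, hence $\|yM\|_\pi \le \lambda \|y\|_\pi$.

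I do not anticipate a genuine obstacle: the only point needing a moment's care is to make the identification $\langle x, \pi\rangle_\pi = \sum_i x_i$ explicit before anything else, and to invoke row-stochasticity of $M$ at precisely the step where the coordinate sum of $yM$ is evaluated; after that, Part 2 is merely an application of the definition of the spectral norm.
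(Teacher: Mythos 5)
Your proof is correct and matches the paper's treatment: the paper simply states that Lemma~\ref{lem:M-operator} is immediate from the definitions of $\pi$ and $\lambda$, and your argument is exactly the intended elaboration (the identity $\langle x,\pi\rangle_\pi=\sum_i x_i$, row-stochasticity for $yM\bot\pi$, and Definition~\ref{def:spectral} for the norm bound). Nothing further is needed.
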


\begin{lemma}{\bf(The effect of the $P$ operator)} \label{lem:P-operator} Let $M$ be an ergodic Markov chain with state space $[n]$ and stationary distribution $\pi$.  Let $f:[n] \rightarrow [0,1]$ be a weight function with $\E_{v\leftarrow \pi}[f(v)] = \mu$. Let $P$ be a diagonal matrix with diagonal entries $P_{j,j} \triangleq e^{rf(j)}$ for $j \in [n]$, where $r$ is a parameter satisfying $0 \leq r \leq 1/2$. Then
\begin{enumerate}
\item $\| (\pi P)^{\parallel} \|_\pi \leq 1+(e^r - 1) \mu$.
\item $\| (\pi P)^{\bot} \|_\pi \leq 2r\sqrt{\mu}$.
\item For every vector $y \bot \pi$, $\| (y P)^{\parallel} \|_\pi \leq 2r\sqrt{\mu} \|y\|_\pi$.
\item For every vector $y \bot \pi$, $\| (y P)^{\bot} \|_\pi \leq e^r \|y\|_\pi$
%stretch a parallel component by at most a factor of $(1+(e^r - 1) \mu)$.
\end{enumerate}
\end{lemma}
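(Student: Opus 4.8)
The plan is to reduce all four bounds to two elementary ingredients: the $\pi$-geometry of the decomposition $x=x^{\parallel}+x^{\bot}$, and a couple of scalar inequalities valid in the regime $0\le r\le 1/2$, $f(\cdot)\in[0,1]$. The one identity I would set up first and then use everywhere is $\langle x,\pi\rangle_{\pi}=\sum_i x_i$ for every $x\in\R^n$; combined with $\|\pi\|_{\pi}=1$ this gives $x^{\parallel}=\big(\sum_i x_i\big)\pi$, hence $\|x^{\parallel}\|_{\pi}=\big|\sum_i x_i\big|$, and by orthogonality of $x^{\parallel}$ and $x^{\bot}$ in $\langle\cdot,\cdot\rangle_{\pi}$ the Pythagorean relation $\|x\|_{\pi}^2=\|x^{\parallel}\|_{\pi}^2+\|x^{\bot}\|_{\pi}^2$. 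Applying this to $x=\pi P$ gives $\sum_i(\pi P)_i=\sum_i\pi(i)e^{rf(i)}=\E_{\pi}[e^{rf}]$, and applied to $x=yP$ with $y\bot\pi$ it gives $\langle yP,\pi\rangle_{\pi}=\sum_i y_i e^{rf(i)}$, which (since $\sum_i y_i=0$) equals $\sum_i y_i\big(e^{rf(i)}-c\big)$ for any constant $c$ of our choosing.

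For part 1, convexity of $t\mapsto e^{rt}$ on $[0,1]$ yields $e^{rf(i)}\le 1+(e^r-1)f(i)$, so $\|(\pi P)^{\parallel}\|_{\pi}=\E_{\pi}[e^{rf}]\le 1+(e^r-1)\mu$. For part 2, I would observe $(\pi P)^{\bot}_i=\pi(i)\big(e^{rf(i)}-\E_{\pi}[e^{rf}]\big)$, so $\|(\pi P)^{\bot}\|_{\pi}^2=\Var_{\pi}\!\big(e^{rf}\big)$; bounding the variance by the second moment about $1$ and using the scalar inequality $e^{y}-1\le 2y$ on $[0,1/2]$ (true by convexity: both sides vanish at $0$ and $e^{1/2}-1\le 1$) together with $f^2\le f$ on $[0,1]$ gives $\Var_{\pi}(e^{rf})\le\E_{\pi}[(e^{rf}-1)^2]\le 4r^2\,\E_{\pi}[f^2]\le 4r^2\mu$, i.e.\ $\|(\pi P)^{\bot}\|_{\pi}\le 2r\sqrt{\mu}$.

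For part 3, I would take $c=\E_{\pi}[e^{rf}]$ in the identity above and apply Cauchy--Schwarz in $\langle\cdot,\cdot\rangle_{\pi}$: $|\langle yP,\pi\rangle_{\pi}|=\big|\sum_i y_i(e^{rf(i)}-c)\big|\le\|y\|_{\pi}\,\sqrt{\Var_{\pi}(e^{rf})}\le 2r\sqrt{\mu}\,\|y\|_{\pi}$, and since $\|(yP)^{\parallel}\|_{\pi}=|\langle yP,\pi\rangle_{\pi}|$ this is exactly the claimed bound. For part 4, the diagonal entries of $P$ are at most $e^r$ because $f(i)\le 1$, so $\|yP\|_{\pi}^2=\sum_i y_i^2 e^{2rf(i)}/\pi(i)\le e^{2r}\|y\|_{\pi}^2$, and the Pythagorean relation gives $\|(yP)^{\bot}\|_{\pi}\le\|yP\|_{\pi}\le e^r\|y\|_{\pi}$.

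None of the four steps is genuinely difficult; the only place needing care is choosing scalar inequalities with clean enough constants, so I would fix $e^{y}-1\le 2y$ on $[0,1/2]$ (rather than a tighter but messier bound) so that the $2r\sqrt{\mu}$ factors in parts 2 and 3 come out exactly, and I would make sure the step $f^2\le f$ (which is what converts a variance of $e^{rf}$ into a linear-in-$\mu$ bound) is flagged explicitly. The remaining risk is purely bookkeeping: consistently using $\langle x,\pi\rangle_{\pi}=\sum_i x_i$ and the orthogonal decomposition, and keeping track of which quantity is a $\pi$-norm versus a plain coordinate sum.
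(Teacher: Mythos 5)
Your proof is correct and follows essentially the same route as the paper: the identity $\langle x,\pi\rangle_\pi=\sum_i x_i$, the Pythagorean decomposition for item 2, Cauchy--Schwarz for item 3, and the entrywise bound $e^{rf(i)}\le e^r$ for item 4, with the same scalar inequalities $e^{rx}\le 1+(e^r-1)x$, $e^r-1\le 2r$, and $f^2\le f$ doing the work. The only (harmless, and arguably cleaner) deviations are cosmetic: you package item 2 as $\Var_\pi(e^{rf})\le\E_\pi[(e^{rf}-1)^2]$ where the paper expands $e^x\le 1+x+x^2$ directly, and in item 3 you center at $\E_\pi[e^{rf}]$ and reuse the variance bound where the paper centers at $1$ via $\pi(P-I)$.
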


Items 1 and 4 of Lemma~\ref{lem:P-operator} state that $P$ can stretch both the perpendicular and parallel components along their original directions moderately. Specifically, a parallel vector is stretched by at most a factor of $(1+(e^r - 1) \mu) \approx 1+ O(r\mu)$ and a perpendicular vector is stretched by a factor of at most $e^r \approx 1+O(r)$. (Recall $r$ will be small.) On the other hand, items 2 and 3 of the lemma state that $P$ can create a new perpendicular component from a parallel component and vice versa, but the new component is of a much smaller size compared to the original component (i.e. only of length at most $2r\sqrt{\mu}$ times the original component).

\iffalse
\begin{remark}
The term $(1+(e^r - 1) \mu)$ in Item 1. of the above lemma may remind the reader of the expectation
$$\E[e^{rX_i}] = 1+(e^r-1) \mu_i$$
from the proof of the standard Chernoff bound on independent $\zo$-random variables presented in Section~\ref{subsec:roadmap}. Indeed, note that
$$\| (\pi P_i)^{\parallel} \|_\pi = \sum_j e^{rf_i(j)} \pi_j = \E_{j\leftarrow \pi}[e^{rf_i(j)}].$$
The term $(1+(e^r - 1) \mu_i)$ is the actual contribution to the moment generating function from the $i$-th step, and the remaining terms can be viewed as ``error'' terms due to the correlation between consecutive steps.
\end{remark}
\fi

\begin{remark}
We note that the key improvement of our analysis over that of Healy~\cite{Healy06} stems from items 2 and 3 of  Lemma~\ref{lem:P-operator}.  Healy~\cite{Healy06} proved a bound with a factor of $(e^r-1)/2 = O(r)$ for both items for the special case of undirected and regular graphs. Our quantitative improvement to $O(r\sqrt{\mu})$ (which is tight) is the key for us to prove a multiplicative Chernoff bound without any restriction on the spectral expansion of $M$.
\end{remark}

Note that Lemma~\ref{lem:M-operator} is immediate from the definitions of $\pi$ and $\lambda$. We focus on the proof of Lemma~\ref{lem:P-operator}:

\begin{proof} (of Lemma~\ref{lem:P-operator}). For the first item, note that by definition, $\| (\pi P)^{\parallel}\|_{\pi} = \langle \pi P, \pi \rangle_\pi = \sum_i e^{rf(i)} \pi_i$. We simplify the sum using the fact that $e^{rx} \leq 1 + (e^r - 1) x$ when $r, x \in [0,1]$.
%\begin{eqnarray*}
{\small
$$
\| (\pi P)^{\parallel}\|_{\pi}  =  \sum_i e^{rf(i)} \pi_i
 \leq  \sum_i (1+ (e^r-1) f(i)) \pi_i
 =  \sum_i \pi_i + (e^r-1) \sum_i f(i) \pi _i
 =  1 + (e^r - 1)\mu,
$$}
%\end{eqnarray*}
where the last equality uses the fact that $\sum_i \pi_i = 1$, and $\sum_i f(i) \pi_i = \E_{v\leftarrow \pi} [f(v)] = \mu$.

For the second item, by the Pythagorean theorem , we have
%\begin{eqnarray*}
{\small
$$
\|(\pi P)^{\bot}\|^2_{\pi}  =  \|\pi P\|^2_{\pi} - \|(\pi P)^{\parallel}\|^2_{\pi}
 =  \sum_{i }e^{2rf(i)}\pi_{i} - \left(\sum_{i }e^{rf(i)}\pi_i\right)^2.$$}
%\end{eqnarray*}
Recall that $r \leq 1/2$ and $f(i) \leq 1$, and therefore $2rf(i) \leq 1$. Using the fact that $1+x \leq e^x\leq 1+x+x^2$ when $x \in [0, 1]$, we have
\begin{eqnarray*}
 \sum_{i}e^{2rf(i)}\pi_{i} - \left(\sum_{i}e^{rf(i)}\pi_i\right)^2
& \leq & \sum_{i}(1+2rf(i) + 4r^2f^2(i))\pi_i - \left(\sum_{i}(1+rf(i))\pi_i\right)^2 \\
& \leq & 1 + 2r\mu + 4r^2 \mu - (1+ r\mu )^2 \\
& = & 1 + 2r\mu + 4r^2 \mu - (1+ 2 r \mu + r^2 \mu^2) \leq   4r^2 \mu,
\end{eqnarray*}
The second inequality uses the fact that $\sum_i f^2(i) \pi(i) \leq \sum_i f(i) \pi(i) = \mu$ (since $0\leq f(i) \leq 1$). It follows that
$\|(\pi P)^{\bot}\|_\pi \leq \sqrt{4r^2 \mu} = 2r \sqrt{\mu}.$

For the third item, by definition, $\|(yP)^{\parallel}\|_\pi = \langle yP, \pi \rangle_\pi$. Since $P$ is diagonal, we have $\langle yP, \pi \rangle_\pi = \langle y, \pi P\rangle_\pi$. By definition, $y \bot \pi$ means $\langle y, \pi \rangle_\pi = 0$. Therefore,
$\|(yP)^{\parallel}\|_\pi = \langle y, \pi P\rangle_\pi - \langle y, \pi \rangle_\pi = \langle y, \pi (P-I)\rangle_\pi.$
By the Cauchy-Schwarz Inequality, we have
$\langle y, \pi (P-I)\rangle_\pi \leq \|y\|_\pi \| \pi(P-I)\|_\pi.$

We proceed to upper bound $\| \pi(P-I)\|_\pi$:
{\small
$$\| \pi(P-I)\|^2_\pi  =  \sum_i (\pi_i (e^{rf(i)} - 1))^2 / \pi_i  = \sum_i (e^{rf(i)} -1)^2 \pi_i.$$}
Using $e^{rx} \leq 1 + (e^r - 1) x$ for $r, x \in [0,1]$, we have
$\sum_i (e^{rf(i)} -1)^2 \pi_i  \leq \sum_i (1 + (e^r-1) f(i) - 1)^2 \pi_i = \sum_i (e^r-1)^2 f^2(i) \pi_i \leq (2r)^2 \sum_i f(i) \pi_i \leq (2r)^2 \mu,$
where the second-to-last inequality uses the fact that $e^r -1 \leq 2r$ for $r \in [0,1]$ and $0\leq f(i)\leq 1$. Therefore,
$\|(yP)^{\parallel}\|_\pi \leq \| \pi(P-I)\|_\pi \|y\|_\pi \leq 2r \sqrt{\mu} \|y\|_\pi.$

Finally, for the fourth item, we have
$$\|(yP)^{\bot}\|^2_\pi \leq \|yP\|^2_\pi = \sum_i \frac{y_i^2 e^{2rf(i)}}{\pi_i} \leq \sum_i \frac{y_i^2 e^{2r}}{\pi_i} = e^{2r} \|y\|^2_\pi,$$
which implies $\|(yP)^{\bot}\| \leq e^r \|y\|_\pi$.
\end{proof}

\paragraph{Recursive analysis} We now provide a recursive analysis for the terms $xP_1M...MP_i$ for $i \leq t$ based on
our understanding of the effects from the linear operators $M$ and $P_i$. This completes the proof for Claim~\ref{claim:spectralchernoff}.

\begin{proof} (of Claim~\ref{claim:spectralchernoff}).
First, recall that
{\small
$$ \E[e^{rX}] = \|(\varphi P_1 M P_2 ... M P_t)^{\parallel}\|_\pi = \|(\varphi P_1 M P_2 ... M P_t M)^{\parallel}\|_\pi = \left\|\left(\varphi \prod_{i=1}^t (P_i M) \right)^{\parallel}\right\|_\pi.$$
}
where the second equality comes from Lemma~\ref{lem:M-operator}.
Our choice of $r$ is $r = \min\{1/2,\log(1/\lambda)/2, 1-\sqrt \lambda, (1-\lambda)\delta/18\}$. We shall explain how we make such a choice as we walk through our analysis.

We now trace the $\pi$-norm of both parallel and perpendicular components of the random walk for each application of $P_iM$. Let $z_0 \triangleq \varphi$ and $z_i = z_{i-1} P_iM$ for $i \in [t]$. By triangle inequality and Lemma \ref{lem:M-operator} and \ref{lem:P-operator}, for every $i \in [t]$,
%\begin{eqnarray*}

\begin{eqnarray*}
\|z_i^{\parallel} \|_\pi  =  \|(z_{i-1}P_iM)^{\parallel} \|_\pi
 =  \|((z_{i-1}^{\parallel} + z_{i-1}^{\bot}) P_iM)^{\parallel} \|_\pi
& \leq &  \|(z_{i-1}^{\parallel}  P_iM)^{\parallel} \|_\pi + \|(z_{i-1}^{\bot} P_iM)^{\parallel} \|_\pi \\
& \leq & \left(1 + (e^r-1)\mu\right) \|z_{i-1}^{\parallel}\|_\pi + \left(2r \sqrt{\mu} \right) \|z_{i-1}^{\bot}\|_\pi,
\end{eqnarray*}
%\end{eqnarray*}
and similarly,
\begin{eqnarray*}
\|z_i^{\bot} \|_\pi \leq  \|(z_{i-1}^{\parallel}  P_iM)^{\bot} \|_\pi + \|(z_{i-1}^{\bot} P_iM)^{\bot} \|_\pi  & \leq  & \left(2r \lambda \sqrt{\mu}  \right)\|z_{i-1}^{\parallel}\|_\pi + \left(e^r \lambda \right) \|z_{i-1}^{\bot}\|_\pi  \\
 &\leq &  \left(2r \lambda \sqrt{\mu}  \right)\|z_{i-1}^{\parallel}\|_\pi + \sqrt{\lambda}\|z_{i-1}^{\bot}\|_\pi,
\end{eqnarray*}
where the last inequality holds when $r \leq (1/2) \log (1/\lambda)$ i.e. $e^r \leq 1/\sqrt{\lambda}$. The reason to require $r \leq (1/2) \log (1/\lambda)$ is that we can guarantee the perpendicular component is \emph{shrinking} (by a factor of $\sqrt{\lambda} < 1$) after each step.

\iffalse
\Znote{I don't think we stated that $r$ is required to be small. We need to find a place to clarify the choice of $r$ in our analysis.}
\fi
%It follows that the lengths of $z_i^{\parallel}$ and $z_i^{\bot}$ are dominated by the following recurrence relation defined over $(\alpha_i)_{i \geq 0}$ and $(\beta_i)_{i \geq 0}$:
Now let $\alpha_0 = \|z_0^{\parallel} \|_\pi=1$ and $\beta_0 = \|z_0^{\bot} \|_\pi$, and define for $i \in [t]$,
%\begin{eqnarray*}
$$
\alpha_i  = \left(1 + (e^r-1)\mu\right)  \alpha_{i-1} + \left(2r \sqrt{\mu} \right) \beta_{i-1} \quad \mbox{ and } \quad
\beta_i  = \left(2r \lambda \sqrt{\mu}  \right) \alpha_{i-1} + \sqrt{\lambda} \beta_{i-1}.
$$
%\end{eqnarray*}
One can prove by induction easily that $\|z_i^{\parallel} \|_\pi \leq \alpha_i$ and $\|z_i^{\bot} \|_\pi \leq \beta_i$ for every $i\in[t]$, and $\alpha_i$'s are strictly increasing. Therefore, bounding the moment generating function $\E[e^{rX}] = \| z_t^{\parallel} \|_\pi \leq \alpha_t$ boils down to bounding the recurrence relation for $\alpha_i$ and $\beta_i$.

Observe that in the recurrence relation, only the coefficient $(1 + (e^r-1)\mu) > 1$ while the remaining coefficients $(2r\sqrt{\mu}),(2r\lambda\sqrt{\mu}),$ and $\sqrt{\lambda}$ are all less than $1$ if $r$ is chosen sufficiently small. This suggests, intuitively, $\alpha_i$'s terms will eventually dominate. This provides us a guide to reduce the recurrence relation to a single variable as follows.

First let us give an upper bound for $\beta_i$.

\begin{claim} \label{clm:bound-beta} For every $i\in [t]$,
$\beta_i \leq 2r \left( \sum_{j=0}^{i-1} \sqrt{\lambda^{j+2} \mu }  \right)  \alpha_{i-1} + \sqrt{\lambda^{i}} \beta_0.$
\end{claim}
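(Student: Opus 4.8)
The plan is to unroll the recurrence $\beta_i = (2r\lambda\sqrt{\mu})\,\alpha_{i-1} + \sqrt{\lambda}\,\beta_{i-1}$ down to $\beta_0$, treating it as a first-order linear recurrence in $\beta_i$ whose inhomogeneous term involves $\alpha_{i-1}$. Iterating once gives $\beta_i = (2r\lambda\sqrt\mu)\alpha_{i-1} + \sqrt\lambda\big[(2r\lambda\sqrt\mu)\alpha_{i-2} + \sqrt\lambda\,\beta_{i-2}\big]$, and continuing all the way down one obtains
\[
\beta_i = 2r\lambda\sqrt{\mu}\,\sum_{k=0}^{i-1}\sqrt{\lambda}^{\,k}\,\alpha_{i-1-k} \;+\; \sqrt{\lambda}^{\,i}\,\beta_0 .
\]
So the first step is simply to state and verify this closed form by a one-line induction on $i$ (base case $i=0$ trivial or $i=1$ direct from the definition; inductive step plugs the formula for $\beta_{i-1}$ into $\beta_i = (2r\lambda\sqrt\mu)\alpha_{i-1} + \sqrt\lambda\,\beta_{i-1}$ and reindexes the sum).

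Next I would use the monotonicity of the $\alpha$ sequence. Since $\alpha_i$ is strictly increasing (this was already observed just before the claim, as $\alpha_i \ge (1+(e^r-1)\mu)\alpha_{i-1} \ge \alpha_{i-1}$), every term $\alpha_{i-1-k}$ with $0\le k\le i-1$ satisfies $\alpha_{i-1-k}\le \alpha_{i-1}$. Substituting this bound into the closed form pulls $\alpha_{i-1}$ out of the sum:
\[
\beta_i \;\le\; 2r\lambda\sqrt{\mu}\,\alpha_{i-1}\sum_{k=0}^{i-1}\sqrt{\lambda}^{\,k} \;+\; \sqrt{\lambda}^{\,i}\,\beta_0 .
\]
Finally I would rewrite the sum in the form displayed in the claim: $\lambda\sqrt{\mu}\cdot\sqrt{\lambda}^{\,k} = \sqrt{\lambda^{2}\mu}\cdot\sqrt{\lambda^{k}} = \sqrt{\lambda^{k+2}\mu}$, so $2r\lambda\sqrt\mu\sum_{k=0}^{i-1}\sqrt\lambda^{\,k} = 2r\sum_{j=0}^{i-1}\sqrt{\lambda^{j+2}\mu}$ (setting $j=k$), and $\sqrt{\lambda}^{\,i} = \sqrt{\lambda^i}$, which yields exactly $\beta_i \le 2r\big(\sum_{j=0}^{i-1}\sqrt{\lambda^{j+2}\mu}\big)\alpha_{i-1} + \sqrt{\lambda^i}\,\beta_0$.

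This claim is essentially a routine manipulation, so I do not anticipate a genuine obstacle; the only point requiring a little care is making sure the induction in the first step correctly handles the reindexing of the geometric sum, and confirming that the strict monotonicity of $\alpha_i$ — needed to replace every $\alpha_{i-1-k}$ by $\alpha_{i-1}$ — indeed holds for the chosen range of $r$ (it does, since $1+(e^r-1)\mu \ge 1$ regardless of how small $r$ is, so no smallness assumption on $r$ is used here). The real work of the argument comes afterward, when this bound on $\beta_i$ is fed back into the recurrence for $\alpha_i$ to close the single-variable recursion; that is where the choice $r = \min\{1/2,\ \log(1/\lambda)/2,\ 1-\sqrt\lambda,\ (1-\lambda)\delta/18\}$ will be exploited.
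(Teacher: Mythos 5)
Your proof is correct and matches the paper's argument essentially verbatim: the paper likewise unrolls the recurrence to get $\beta_i = 2r\bigl(\sum_{j=0}^{i-1}\sqrt{\lambda^{j+2}\mu}\,\alpha_{i-j-1}\bigr) + \sqrt{\lambda^i}\,\beta_0$ and then replaces each $\alpha_{i-j-1}$ by $\alpha_{i-1}$ using the monotonicity of the $\alpha_i$. Your extra remarks on the reindexing and on why monotonicity needs no smallness assumption on $r$ are accurate but not needed beyond what the paper already states.
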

\begin{proof} of Claim~\ref{clm:bound-beta}.
The lemma follows by expanding the recurrence relation and using the fact that $\alpha_i$'s are increasing. i.e.
{\small
%\begin{eqnarray*}
$$
\beta_i  =  2r\lambda \sqrt{\mu_i} \alpha_{i-1} + \sqrt{\lambda} \beta_{i-1}
 =  2r\lambda \sqrt{\mu} \alpha_{i-1} + \sqrt{\lambda} 2r\lambda \sqrt{\mu} \alpha_{i-2} + \sqrt{\lambda^2} \beta_{i-2}
 =  \dots
 =  2r \left( \sum_{j=0}^{i-1} \sqrt{\lambda^{j+2} \mu} \alpha_{i-j-1}\right) +  \sqrt{\lambda^{i}} \beta_0
$$
%\end{eqnarray*}}
}
Finally, by using the fact that $\alpha_i$ are strictly increasing, we complete the proof.
\end{proof}

We can then bound $\alpha_i$ by substituting $\beta_{i-1}$ using Claim~\ref{clm:bound-beta}.

\begin{claim} \label{clm:bound-alpha}
$\alpha_1 \leq (1 + (e^r-1)\mu)  + 2r \sqrt{\mu} \beta_0,$
and for every $2 \leq i \leq t$,
{\small
$$\alpha_i \leq \left(1 + (e^r -1) \mu + 4r^2 \sqrt{\mu} \left( \sum_{j=0}^{i-2} \sqrt{\lambda^{j+2} \mu}\right)\right) \alpha_{i-1} + 2r\sqrt{\lambda^{i-1}\mu }  \beta_0.$$}
\end{claim}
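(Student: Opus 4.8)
The plan is to substitute the bound on $\beta_{i-1}$ supplied by Claim~\ref{clm:bound-beta} directly into the defining recurrence
$\alpha_i = (1+(e^r-1)\mu)\,\alpha_{i-1} + (2r\sqrt{\mu})\,\beta_{i-1}$,
and then exploit the fact (established when the recurrences for $\alpha_i,\beta_i$ were introduced) that the sequence $(\alpha_i)$ is non-decreasing in order to collapse the resulting expression, which a priori involves both $\alpha_{i-1}$ and $\alpha_{i-2}$, into a single multiple of $\alpha_{i-1}$.

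First I would dispose of the base case $i=1$ separately: since $\alpha_0 = 1$, the defining recurrence gives $\alpha_1 = (1+(e^r-1)\mu) + 2r\sqrt{\mu}\,\beta_0$ outright, which is the first displayed inequality (in fact with equality). For $2 \le i \le t$, I would invoke Claim~\ref{clm:bound-beta} at index $i-1$ — legitimate precisely because $i-1 \in [t]$ — to get $\beta_{i-1} \le 2r\bigl(\sum_{j=0}^{i-2}\sqrt{\lambda^{j+2}\mu}\bigr)\alpha_{i-2} + \sqrt{\lambda^{i-1}}\,\beta_0$. Plugging this into the recurrence for $\alpha_i$ and expanding the product gives
$$\alpha_i \le (1+(e^r-1)\mu)\,\alpha_{i-1} + 4r^2\sqrt{\mu}\Bigl(\sum_{j=0}^{i-2}\sqrt{\lambda^{j+2}\mu}\Bigr)\alpha_{i-2} + 2r\sqrt{\lambda^{i-1}\mu}\,\beta_0.$$
Since the $\alpha_i$ are strictly increasing, $\alpha_{i-2}\le\alpha_{i-1}$, and grouping the first two terms yields exactly the asserted bound.

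I do not expect a genuine obstacle here: the argument is a one-line substitution followed by a monotonicity step. The only points requiring mild care are the index bookkeeping — Claim~\ref{clm:bound-beta} is stated for indices in $[t]$, so it applies to $\beta_{i-1}$ only when $i\ge 2$, which is exactly why the case $i=1$ is handled on its own — and checking that all the coefficients appearing ($1+(e^r-1)\mu$, $2r\sqrt{\mu}$, $2r\lambda\sqrt{\mu}$, $\sqrt{\lambda}$, and the $\sqrt{\lambda^{j+2}\mu}$ terms) are non-negative, so that substituting an upper bound for $\beta_{i-1}$ indeed preserves the inequality for $\alpha_i$; this holds since $r,\mu,\lambda\ge 0$.
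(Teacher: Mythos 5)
Your proof is correct and matches the paper's argument essentially verbatim: both handle $i=1$ directly from the recurrence with $\alpha_0=1$, and for $i\ge 2$ both substitute the bound from Claim~\ref{clm:bound-beta} at index $i-1$ into the recurrence for $\alpha_i$ and then use the monotonicity $\alpha_{i-2}\le\alpha_{i-1}$ to absorb the cross term. Your additional remarks about index range and non-negativity of coefficients are sound but not points of divergence.
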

\begin{proof} The case of $i=1$ is trivial. For $2 \leq i \leq t$, this follows by applying the recurrence relation, Claim~\ref{clm:bound-beta}, and the fact that $\alpha_{i-2} < \alpha_{i-1}$.
{\small
\begin{eqnarray*}
\alpha_i & = & (1 + (e^r-1)\mu) \alpha_{i-1} + \left(2r \sqrt{\mu} \right) \beta_{i-1} \\
& \leq & (1 + (e^r-1)\mu) \alpha_{i-1} + (2r\sqrt{\mu})  \left(2r \left( \sum_{j=0}^{i-2} \sqrt{\lambda^{j+2} \mu }  \right) \alpha_{i-2} + \sqrt{\lambda^{i-1}} \beta_0 \right) \\
& \leq & \left(1 + (e^r-1)\mu +  4r^2 \sqrt{\mu} \left( \sum_{j=0}^{i-2} \sqrt{\lambda^{j+2} \mu }  \right)  \right)\alpha_{i-1} + 2r \sqrt{\lambda^{i-1}\mu} \beta_0
\end{eqnarray*}
}
\end{proof}
For notational simplicity, let $A_1=1+(e^r-1)$ and for $1<i\leq t$, let
{\small
$$A_i \triangleq \left(1 + (e^r-1)\mu +  4r^2 \sqrt{\mu} \left( \sum_{j=0}^{i-2} \sqrt{\lambda^{j+2}  \mu }  \right)  \right).$$}
 Claim~\ref{clm:bound-alpha} then can be expressed as
$\alpha_i \leq A_i \alpha_{i-1} + 2r\sqrt{\mu} \min\{ \sqrt{\lambda^{i-1}}, 1\}  \beta_0,$
 for every $i\in [t]$.
\iffalse
$$\alpha_i \leq
\begin{cases}
A_i \alpha_{i-1} + 2r \beta_0 & \mbox{ for $i = 1$}. \\
A_i \alpha_{i-1} + 2r \sqrt{\lambda^{i-2}} \beta_0 & \mbox{ for $2 \leq i  \leq t$}. \\
\end{cases}
$$

When $\beta_0 = 0$, which corresponds to the case where the initial distribution $x = \pi$, this readily implies
$$\alpha_t \leq A_t A_{t-1} \dots A_1 \alpha_0 = \prod_i A_i.$$
\fi
By expanding iteratively, we obtain
{\small
\begin{eqnarray*}
\alpha_t & \leq & A_t(A_{t-1}(\cdots(A_3(A_2(A_1 + 2r\sqrt{\mu} \beta_0) + 2r\sqrt{\lambda\mu}\beta_0) + 2r\sqrt{\lambda^2\mu}\beta_0)\cdots)+2r\sqrt{\lambda^{t-2}\mu}\beta_0)+2r\sqrt{\lambda^{t-1}\mu}\beta_0 \\
& = & (A_t\cdots A_1) + (A_t \cdots A_2 (2r\sqrt{\mu}\beta_0)) + (A_t \cdots A_3 (2r\sqrt{\lambda\mu}\beta_0)) + \dots + A_t (2r\sqrt{\lambda^{t-2}\mu}\beta_0) + 2r\sqrt{\lambda^{t-1}\mu}\beta_0 \\
& \leq & \left( 1+ 2r\sqrt{\mu}\beta_0 + 2r\sqrt{\lambda\mu}\beta_0 + 2r \sqrt{\lambda^2\mu} \beta_0 + \cdots 2r\sqrt{\lambda^{t-1}\mu}\beta_0\right) \left(\prod_i A_i\right)\\
& \leq &  \left(  1+ \frac{4r\sqrt{\mu}\beta_0}{1-\sqrt{\lambda}} \right) \left(\prod_i A_i\right)
 \leq  \left(  1 + \frac{8r\sqrt{\mu}\beta_0}{1-\lambda} \right) \left(\prod_i A_i\right),
\end{eqnarray*}
}
where the last inequality uses the fact that $1/(1-\sqrt{\lambda}) \leq 2/(1-\lambda)$ for $\lambda \in [0,1)$. It remains to upper bound $\prod_i A_i$. Using $(1+x) \leq e^{x}$, we have
{\small
$$\prod_{i=1}^t A_i \leq \exp\left\{ (e^r-1)\mu+\sum_{i=2}^t \left( (e^r-1)\mu_i +  4r^2 \sqrt{\mu} \left( \sum_{j=0}^{i-2} \sqrt{\lambda^{j+2} \mu }\right)\right)\right\}.$$}
The first two sums in the exponent lead to $\sum_i (e^r-1)\mu_i = (e^r-1) \mu t$. \iffalse
the ``right'' term that also appeared in the proof of the independent case presented in the beginning of Section~\ref{sec:boundfromspectral}.
\fi.
We now bound the last sum in the exponent, which can be viewed as an ``error'' term due to the correlation between each step of the random walk.
{\small
$$
\sum_{i=2}^t 4r^2 \sqrt{\mu} \sum_{j=0}^{i-2} \sqrt{\lambda^{j+2} \mu}
 \leq  4r^2\mu  \sum_{i=1}^t \sum_{j=0}^{i-2} \sqrt{\lambda^j} =  4r^2  \mu t  \sum_{j=0}^{t-2} \sqrt{\lambda^j}
 \leq  \frac{8r^2 \mu t}{1-\lambda},
$$
}
\iffalse
We use the AMGM inequality $\sqrt{xy} \leq (x+y)/2$ to simplify the second sum as follows.
\begin{eqnarray*}
\lefteqn{\sum_{i=2}^t 4r^2 \sqrt{\mu_i} \sum_{j=0}^{i-2} \sqrt{\lambda^{j+2} \mu_{i-j-1} }} \\
& \leq & 4r^2  \sum_{i=1}^t \sum_{j=0}^{i-2} \sqrt{\mu_i \mu_{i-j-1}} \sqrt{\lambda^j} \\
& \leq & 4r^2  \sum_{j=0}^{t-2} \sum_{i = j+2}^t \frac{\mu_i + \mu_{i-j-1}}{2} \sqrt{\lambda^j} \\
& \leq & 4r^2  \mu t \sum_{j=0}^{t-2} \sqrt{\lambda^j} \\
%& \leq & 4r^2 \lambda \cdot \mu t \cdot \frac{1}{1-\sqrt{\lambda}} \\
& \leq & \frac{8r^2 \mu t}{1-\lambda},
\end{eqnarray*}
where the second inequality uses $\sum_{i = j+2}^t (\mu_i + \mu_{i-j-1})/2 \leq \sum_i \mu_i = \mu t$, and the
\fi
where last inequality uses $\sum_{j=0}^{t-2} \sqrt{\lambda^j} \leq 1/(1-\sqrt{\lambda}) \leq 2/(1-\lambda).$ Putting things together, we have
$$\prod_{i=1}^t A_i  \leq \exp \left\{ (e^r-1) \mu t + \frac{8r^2  \mu t}{1-\lambda} \right\} = \exp \left\{ \left( (e^r-1) + \frac{8 r^2  }{1-\lambda} \right)  \mu t\right\},$$
and recalling that  $\|\varphi^\parallel\|_\pi=1$ and $\beta_0 = \|\varphi^{\bot}\|_\pi $,
%This completes the proof of Lemma~\ref{lem:xxx} for the $x= \pi$ case. For general $x$, recall that $\beta_0 = \|x^{\bot}\|_\pi = \|x \|_\pi - 1$, we have
{\small
$$\E[e^{rX}] \leq \alpha_t \leq \left( 1 + \frac{8r\mu\beta_0}{1-\lambda} \right)  \left(\prod_i A_i\right) \leq 2\max\left\{1,\frac{8r\sqrt{\mu}}{1-\lambda}\right\}\|\varphi\|_\pi \exp\left\{\left((e^r - 1) + \frac{8 r^2 }{(1-\lambda)}\right) \mu t\right\}.$$}

Recall that our goal is to choose an $r$ to bound $\E[e^{rX}] / e^{r(1+\delta)\mu t}$. Choosing $r = \min\{1/2,\log(1/\lambda)/2, 1-\sqrt \lambda, (1-\lambda)\delta/18\} = (1-\lambda)\delta/18$, we complete the proof of Claim~\ref{claim:spectralchernoff}.
%\Hnote{what's the value of $r$ to be put to get the constant 36?}
%= \prod_i \left(1 + (e^r-1)\mu_i +  2r \left( \sum_{j=0}^{i-2} \sqrt{\lambda^{j+2} \cdot \mu_{i-j} }  \right)  \right)
%Let $\alpha_i = \|z_i^{\parallel} \|_\pi$ and $\beta_i = \|z_i^{\bot} \|_\pi$ for $i \in \{0,1,\dots, t\}$. Note that $\alpha_0 = 1$, and
\end{proof}

Before completing this subsection, we make a final remark.
Our proof also works even for the case $\E_{\pi}[f_i(v)]$ are \emph{different} for different values of $i$, which results in a more general Chernoff type bound based on spectral expansions. This more general result, as far as we know, has not been noted in existing literatures with the exception of Healy~\cite{Healy06}, who gave a Chernoff bound of this kind with stronger assumptions for regular graphs, although the analysis given by Lezaud~\cite{Lezaud04} or Wagner~\cite{Wagner06} also appears to be generalizable as well.
On the other hand, this strengthened result of Claim~\ref{claim:spectralchernoff} does not seem to be sufficient to remove the requirement that $\E_{\pi}[f_i(v)]$ are the same for Theorem~\ref{thm:mixdeviation}.

\subsection{Continuous Time Case}
We now generalize our main result to cover the continuous time chains. The analysis is similar to the one presented by Lezaud~\cite{Lezaud04} and will be deferred to Appendix~\ref{sec:continuous time appendix}.

\begin{theorem}\label{thm:continousbound}Let $\Lambda$ be the generator of an ergodic continuous time Markov chain with state space $[n]$ and mixing time $T=T(\epsilon)$. Let $\{v_t: t \in \mathbf R^+\}$ be a random walk on the chain starting from an initial distribution $\varphi$ such that $v_t$ represents the state where the walk stay at time $t$. Let $\{f_t: [n] \rightarrow [0, 1]\mid t \in \mathbf R^+\}$ be a family of functions such that $\mu = \E_{v \leftarrow \pi}[f_t(v)]$ for all $t$. Define the weight over the walk $\{v_s: s \in \mathbf R^+\}$ up to time $t$ by $X_t \triangleq \int_0^t f_s(v_s) ds$. There exists a constant $c$ such that
{\small
\begin{eqnarray*}
\mbox{1. } \Pr[ X \geq (1+\delta)\mu t] & \leq &
\begin{cases}
c\|\varphi\|_\pi\exp\left(-\delta^2\mu t / (72T)\right) & \mbox{ for $0 \leq \delta \leq 1$} \\
c\|\varphi\|_\pi\exp\left(-\delta\mu t/(72T)\right) & \mbox{ for $\delta > 1$}
\end{cases}
\\
\mbox{2. }\Pr[ X \leq (1-\delta)\mu t] & \leq & c\|\varphi\|_\pi\exp\left(-\delta^2\mu t/ (72T)\right) \quad  \quad \mbox{for $0 \leq \delta \leq 1$}
\end{eqnarray*}}
\end{theorem}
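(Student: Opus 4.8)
The plan is to reduce the continuous-time statement to the discrete-time Theorem~\ref{thm:mixdeviation} via a discretization-and-limit argument, following the approach of Lezaud~\cite{Lezaud04} and Fill~\cite{Fill91}. Fix a scale parameter $b > 0$ and consider the discrete-time chain $M_b \triangleq M(b) = e^{b\Lambda}$, which describes the walk observed at times $0, b, 2b, \dots$. The key observations are: (i) the stationary distribution of $M_b$ is the same $\pi$ as the continuous chain, and (ii) the $\epsilon$-mixing time of $M_b$ is $\lceil T(\epsilon)/b \rceil$ (up to the obvious rounding), since $\|\varphi M(b)^k - \pi\|_{TV} = \|\varphi M(kb) - \pi\|_{TV}$. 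I would approximate the integral $X_t = \int_0^t f_s(v_s)\,ds$ by the Riemann-type sum $X_t^{(b)} \triangleq b \sum_{j=0}^{\lfloor t/b\rfloor - 1} f_{jb}(v_{jb})$, which is exactly a weighted sum along a random walk on $M_b$ of length $\approx t/b$.

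The main steps are then as follows. First, apply Theorem~\ref{thm:mixdeviation} to the walk on $M_b$ with the weight functions $g_j \triangleq f_{jb}$ (all of which have mean $\mu$ under $\pi$, so the hypotheses are met), and with walk-length $n_b \triangleq \lfloor t/b \rfloor$; this gives, for the scaled sum $\frac{1}{b}X_t^{(b)} = \sum_j f_{jb}(v_{jb})$, a bound of the form $c\|\varphi\|_\pi \exp(-\delta^2 \mu n_b / (72 \lceil T/b\rceil))$ for $0\le\delta\le1$ (and analogously for $\delta>1$ and the lower tail). Since $n_b / \lceil T/b \rceil \to t/T$ as $b \to 0$, the exponent converges to $-\delta^2 \mu t/(72T)$, and crucially the leading constant stays $c\|\varphi\|_\pi$ with no blow-up — this is exactly the point emphasized after Eq.~(\ref{eqn:trivial}) in the excerpt, which is why shaving the factor of $T$ matters here. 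Second, I would argue that $X_t^{(b)} \to X_t$ almost surely as $b\to0$: the random walk on a finite-state continuous chain has, almost surely, only finitely many jumps in $[0,t]$, so the sample path $s\mapsto v_s$ is a step function with finitely many pieces, and the weight functions $f_s$ are bounded in $[0,1]$; hence for $f_s$ that vary measurably in $s$ (and one assumes enough regularity, e.g. piecewise continuity in $s$, to make the Riemann sums converge) the sums $X_t^{(b)}$ converge to the integral. Third, combine these: for any fixed threshold, $\Pr[X_t \ge (1+\delta)\mu t] \le \liminf_{b\to0} \Pr[X_t^{(b)}/b \ge (1+\delta')\mu n_b]$ for $\delta'$ slightly less than $\delta$ (using that $b n_b \to t$), and letting $\delta' \uparrow \delta$ recovers the claimed bound; a short $\epsilon$-$\delta$ bookkeeping handles the passage between the event on $X_t$ and the events on $X_t^{(b)}$.

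I expect the main obstacle to be the interchange of limits and probabilities — more precisely, justifying that the discrete-time tail bounds, which hold for every $b$ with a $b$-independent constant, pass to the limit to bound the tail of the genuinely continuous object $X_t$. The cleanest route is to first establish the almost-sure convergence $X_t^{(b)} \to X_t$ (using the finite-jump structure of the path and boundedness/regularity of the $f_s$), then use the Portmanteau-type inequality $\Pr[X_t > a] \le \liminf_b \Pr[X_t^{(b)} > a - \eta]$ for the open event, and finally let the slack $\eta$ and the discrepancy between $\mu n_b b$ and $\mu t$ vanish. One should also be slightly careful that the walk must start from $\varphi$ and that $\|\varphi\|_\pi$ is the same quantity in both settings (it is, since the $\pi$-norm depends only on $\varphi$ and $\pi$). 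Because the argument is standard once Theorem~\ref{thm:mixdeviation} is in hand and the constant does not degrade, I would keep this part terse and relegate the routine measure-theoretic bookkeeping to the appendix, exactly as the excerpt indicates.
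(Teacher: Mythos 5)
Your proposal matches the paper's own proof essentially exactly: the paper likewise discretizes at scale $b$, applies Theorem~\ref{thm:mixdeviation} to the discretized chain (noting that the stationary distribution is unchanged, the mixing time becomes $T/b$, the number of steps becomes $t/b$, and the $b$'s cancel in the exponent), and then takes $b\to 0$, citing Lezaud for the limit passage. Your write-up is in fact somewhat more careful than the paper's, which leaves the almost-sure convergence of the Riemann sums and the interchange of limit and probability implicit.
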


\bibliographystyle{plain}
\bibliography{rwchernoff}

\appendix

\section{Construction of Mixing Markov Chain with No Spectral Expansion}\label{asec:missing}

In this section, we show that any ergodic Markov chain $M$ with mixing time $T = T(1/4)$ can be modified to a chain $M'$ such that $M'$ has mixing time $O(T)$ but spectral expansion $\lambda(M') = 1$.

Our modification is based on the following simple observation. Let $M'$ be an ergodic Markov chain with stationary distribution $\pi'$. If there exist two states $v$ and $v'$ such that (i) $M'_{v,v'} = 1$, i.e., state $v$ leaves to state $v'$ with probability $1$, and (ii) $M'_{u,v'} = 0$ for all $u\neq v$, i.e., the only state transits to $v'$ is $v$, then $\lambda(M') = 1$: Note that in this case, $\pi'(v) = \pi'(v')$ since all probability mass from $v$ leaves to $v'$, which receives probability mass only from $v$. Consider a distribution $x$ whose probability mass all concentrates at $v$, i.e., $x_v= 1$ and $x_u=0$ for all $u\neq v$. One step walk from $x$ results in the distribution $xM'$ whose probability mass all concentrates at $v'$. By definition, $\|x\|_{\pi'} = \|xM'\|_{\pi'}$ and thus $\lambda(M') = 1$.

Now, let $M$ be an ergodic Markov chain with mixing time $T = T(1/4)$ and stationary distribution $\pi$. We shall modify $M$ to a Markov chain $M'$ that preserves the mixing-time and satisfies the above property. We mention that it is not hard to modify $M$ to satisfy the above property. The challenge is to do so while preserving the mixing-time. Our construction is as follows.
\begin{itemize}
\item For every state $v$ in $M$, we ``split'' it into three states $(v, in), (v, mid), (v,out)$ in $M'$.
\item For every state $(v,in)$ in $M'$, we set $M'_{(v,in),(v,in)} = M'_{(v,in),(v,mid)} = 1/2$, i.e., $(v,in)$ stays in the same state with probability $1/2$ and transits to $(v,mid)$ with probability $1/2$.
\item For every state $(v,mid)$ in $M'$, we set $M'_{(v,mid),(v,out)} = 1$, i.e., $(v,mid)$ always leaves to $(v,out)$.
\item For every pairs of states $u,v$ in $M$, we set the transition probability $M'_{(u,out),(v,in)}$ from $(u,out)$ to $(v,in)$ to be $M_{u,v}$.
\end{itemize}

It is not hard to verify that the modified chain $M'$ is well-defined, ergodic, and satisfies the aforementioned property (namely, $(v,mid)$ leaves to $(v,out)$ with probability $1$ and is the only state that transits to $(v,out)$). It remains to show that $M'$ has mixing-time $O(T)$. Toward this goal, let us define yet another Markov chain $C$ that consists of three states $\{ in, mid, out\}$ with transition probability $C_{in,in} = C_{in,mid} = 1/2$, and $C_{mid,out} = C_{out,in} = 1$. Clearly, $C$ is ergodic and has constant mixing-time. Now, the key observation is that a random walk on $M'$ can be decomposed into walks on $M$ and $C$ in the following sense: every step on $M'$ corresponding to a step on $C$ in a natural way, and one step on $M'$ from $(u,out)$ to $(v,in)$ can be identified as a step from $u$ to $v$ in $M$. Note that the walks on $M$ and $C$ are independent, and in expectation, every $4$ steps of walk on $M'$ induce one step of walk on $M$. It is not hard to see from these observation that the mixing time of $M'$ is at most $8T$.

\iffalse
\myparab{Mixing Markov chain $M'$ with $\lambda(M') = 1$}

Given an arbitrary ergodic Markov chain $M$ with mixing time $T(\epsilon)$, we construct a chain $M'$ as follows.
%Our goal here is to construct a new ergodic Markov chain $M'$ such that
%$\lambda(M') = 1$ and the mixing time of $M'$, namely $T'(\epsilon)$, is at most a constant times larger than $M$. Our construction is as follows.
We pick an arbitrary state $v$ in $M$ and ``split'' the state into two states $v^+$ and $v^-$. We add a transition from $v^+$ to $v^-$ so that any walk at the state $v^+$ has to move to $v^-$ in the next step.
All the transitions (edges) going into $v$ shall be redirected to the new state $v^-$. Similarly, all the transitions leaving from $v$ shall become leaving from $v^-$ in $M'$.
%One can see that the stationary distribution of $M'$ shall be the same as $M$ except for the states $v^+$ and $v^-$, which have equal density in the stationary distribution and their densities sum to $\pi(v)$, i.e. the density of $v$ in $M$'s stationary distribution.

Now let us consider a distribution $x$ whose probability mass all concentrates at $v^+$, i.e. $x_{v^+} = 1$ and all other entries of $x$ are $0$. Then the distribution $xM'$'s mass all concentrates at $v^-$. Therefore, $\|x\|_{\pi} = \|xM'\|_{\pi}$, which implies $\lambda(M') = 1$ (the implication can be seen by considering the decomposition of $x$ along $\pi$ and perpendicular to $\pi$). On the other hand, the stationary distribution of $M'$ is approximately the same as that of $M$, except that in the $M'$ chain the probabilities of $v^+$ and $v^-$ are the same and sum up to a number close to the probability of $v$ in $M$ (as the state space gets larger, the distribution of $M'$ in fact approaches $M$ but with a split of the probability of $v$ into equal halves each for $v^+$ and $v^-$).

there is a way of modifying M into M'
such that the walk behaves "asymptotically similar"
\fi

\section{The Bound When the Sum Is Less Than Mean}\label{asec:less}
We now prove the remaining part of Claim~\ref{claim:spectralchernoff}, i.e.

\begin{claim}\label{thm:mainmultiplicativebound2} Let $M$ be an ergodic Markov chain with state space $[n]$, stationary distribution $\pi$, and spectral expansion $\lambda = \lambda(M)$.  Let $(V_1,\dots, V_t)$ denote a $t$-step random walk on $M$ starting from an initial distribution $\varphi$ on $[n]$, i.e., $V_1 \leftarrow \varphi$.  For every $i \in [t]$, let $f_i: [n] \rightarrow [0,1]$ be a weight function at step $i$ such that the expected weight $\E_{v\leftarrow \pi}[f_i(v)] = \mu$ for all $i$. Define the total weight of the walk $(V_1,\dots, V_t)$ by $X \triangleq \sum_{i=1}^t f_i(V_i)$.  There exists some constant $c$ and a parameter $r >0$ that depends only on $\lambda$ and $\delta$ such that
\begin{eqnarray*}
\mbox{2. }\frac{\E[e^{-rX}]}{e^{-r(1-\delta)\mu t}} & \leq & c \|\varphi\|_\pi  \exp\left(-\delta^2  (1-\lambda)  \mu t/ 36\right) \quad  \quad \mbox{for $0 \leq \delta \leq 1$.}
%\\
%&&\mbox{A feasible $r$ satisfying the inequality is $\min\{1/2,\log(1/\lambda)/2,1-\sqrt{\lambda},(1-\lambda)\delta/8\}$}.
\end{eqnarray*}
\end{claim}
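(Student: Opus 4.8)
The plan is to re-run the recursive analysis used for the first inequality of Claim~\ref{claim:spectralchernoff} essentially verbatim, changing only the weight matrices: the diagonal matrices $P_i$ now have entries $(P_i)_{j,j} \triangleq e^{-rf_i(j)} \in [e^{-r},1]$ rather than $e^{rf_i(j)}$. Exactly as in \eqref{eqn:walkprob} one still has $\E[e^{-rX}] = \|\varphi P_1 M P_2 \cdots M P_t\|_1 = \left\|\left(\varphi \prod_{i=1}^t (P_i M)\right)^{\parallel}\right\|_\pi$, so writing $z_0 = \varphi$ and $z_i = z_{i-1} P_i M$ it suffices to bound $\|z_t^{\parallel}\|_\pi$ from above. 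We may assume $\delta>0$ (the bound is trivial otherwise), and note that the lower tail has no ``$\delta>1$'' case because $X\ge 0$ forces $\Pr[X\le(1-\delta)\mu t]=0$ there.

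First I would record the analogue of Lemma~\ref{lem:P-operator} for these $P_i$; Lemma~\ref{lem:M-operator} is used unchanged. Only item~1 changes qualitatively: since $x\mapsto e^{-rx}$ is convex, $e^{-rx}\le 1-(1-e^{-r})x$ on $[0,1]$, hence $\|(\pi P)^{\parallel}\|_\pi=\sum_i\pi_i e^{-rf(i)}\le 1-(1-e^{-r})\mu$ --- that is, $P$ now \emph{contracts} the $\pi$-direction, and this contraction is what drives the decay. Items~2 and~3 go through with the same $O(r\sqrt\mu)$ bounds, now using $e^{-u}\le 1-u+u^2/2$ and $1-e^{-u}\le u$ for $u\ge 0$ in place of the estimates for $e^u$; and item~4 even \emph{improves} to $\|(yP)^{\bot}\|_\pi\le\|y\|_\pi$ since every diagonal entry of $P$ is at most $1$, so the side condition $r\le\tfrac12\log(1/\lambda)$ from the first proof is no longer needed.

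Combining these with Lemma~\ref{lem:M-operator} exactly as before yields, writing $q\triangleq 1-(1-e^{-r})\mu$,
\[
\|z_i^{\parallel}\|_\pi \le q\,\|z_{i-1}^{\parallel}\|_\pi + 2r\sqrt\mu\,\|z_{i-1}^{\bot}\|_\pi,\qquad \|z_i^{\bot}\|_\pi \le 2r\lambda\sqrt\mu\,\|z_{i-1}^{\parallel}\|_\pi + \lambda\,\|z_{i-1}^{\bot}\|_\pi,
\]
defining majorants $\alpha_i\ge\|z_i^{\parallel}\|_\pi$ and $\beta_i\ge\|z_i^{\bot}\|_\pi$ with $\alpha_0=1$, $\beta_0=\|\varphi^{\bot}\|_\pi$. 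Here is the one real departure from the earlier proof: $\alpha_i$ is now \emph{decreasing}, so the ``$\alpha_i$ increasing'' step of Claim~\ref{clm:bound-beta} is unavailable. Instead I would use the trivial lower bound $\alpha_i\ge q\,\alpha_{i-1}$, hence $\alpha_{i-1-j}\le\alpha_{i-1}/q^{j+1}$, to expand the $\beta$-recursion and pull out $\alpha_{i-1}$: $\beta_{i-1}\le 2r\sqrt\mu\,\alpha_{i-1}\sum_{j\ge 1}(\lambda/q)^{j}+\lambda^{i-1}\beta_0\le \tfrac{c_1 r\sqrt\mu}{1-\lambda}\alpha_{i-1}+\lambda^{i-1}\beta_0$. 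This is legitimate because $\lambda/q<1$, which holds since $(1-e^{-r})\mu\le r$ and our eventual choice gives $r\le(1-\lambda)/18$; the constant $c_1$ comes from $q-\lambda\ge\tfrac{17}{18}(1-\lambda)$. Substituting back produces a single scalar recursion $\alpha_i\le B\,\alpha_{i-1}+2r\sqrt\mu\,\lambda^{i-1}\beta_0$ with $B\triangleq q+\tfrac{c_2 r^2\mu}{1-\lambda}=1-\gamma\mu$, where $\gamma\triangleq(1-e^{-r})-\tfrac{c_2 r^2}{1-\lambda}$ and one can take $c_2=72/17<8.5$.

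Unrolling, $\alpha_t\le B^t+2r\sqrt\mu\,\beta_0\sum_{k=0}^{t-1}B^{t-1-k}\lambda^{k}=B^t+2r\sqrt\mu\,\beta_0\,\frac{B^t-\lambda^t}{B-\lambda}\le B^t\left(1+\tfrac{4r\sqrt\mu\,\beta_0}{1-\lambda}\right)$, using $B>\lambda$ and $B-\lambda\ge\tfrac{17}{18}(1-\lambda)$; since $\|\varphi\|_\pi\ge 1$, $\beta_0\le\|\varphi\|_\pi$, $\mu\le 1$ and $r/(1-\lambda)=\delta/18=O(1)$, the prefactor is at most $2\|\varphi\|_\pi$. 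Finally I would take the \emph{same} $r=(1-\lambda)\delta/18$ as in the first part and verify the single inequality $\gamma\ge r(1-\delta)+\delta^2(1-\lambda)/36$; with $1-e^{-r}\ge r-r^2/2$ this reduces, after cancelling common factors, to an absolute numerical inequality ($\tfrac12\ge\tfrac{1}{36}+\tfrac{4}{17}$) which holds comfortably. Then $B^t\le e^{-\gamma\mu t}\le e^{-r(1-\delta)\mu t}\,e^{-\delta^2(1-\lambda)\mu t/36}$, and $\E[e^{-rX}]\le\alpha_t$ gives $\E[e^{-rX}]/e^{-r(1-\delta)\mu t}\le 2\|\varphi\|_\pi\,e^{-\delta^2(1-\lambda)\mu t/36}$, as claimed. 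The main obstacle is precisely this reversed monotonicity of $\alpha_i$: the ``increasing $\alpha$'' simplification of Claim~\ref{clm:bound-beta} must be replaced by $\alpha_{i-1-j}\le\alpha_{i-1}/q^{j+1}$, which requires the geometric series $\sum(\lambda/q)^j$ to converge and hence constrains how small $r$ may be chosen relative to $1-\lambda$; one must then re-track all absolute constants to confirm the final exponent is still $\delta^2(1-\lambda)\mu t/36$ for the unchanged $r=(1-\lambda)\delta/18$. Everything else is a mechanical sign change in Lemma~\ref{lem:P-operator}.
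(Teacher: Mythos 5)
Your proof is correct and follows essentially the same route as the paper's Appendix~\ref{asec:less}: a sign-flipped analogue of Lemma~\ref{lem:P-operator} (the paper's Lemma~\ref{lem:ndop}) fed into the same two-variable recursion for $\alpha_i,\beta_i$. The only minor difference is how the now-decreasing $\alpha_i$ is handled --- you lower-bound the ratio $\alpha_i/\alpha_{i-1}$ by $q=1-(1-e^{-r})\mu$ and sum $(\lambda/q)^j$, whereas the paper uses $\alpha_i\geq\sqrt{\lambda}\,\alpha_{i-1}$ (guaranteed by $r\leq 1-\sqrt{\lambda}$) to the same effect, and the paper takes $r=(1-\lambda)\delta/8$ here rather than keeping $(1-\lambda)\delta/18$; both choices yield the stated constant.
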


We mimic the proof strategy presented in Section~\ref{sec:boundmgf}. Observe first that
$$\E[e^{-rX}] = \|xP_1MP_2...MP_t\|_1,$$
where $P_i$'s are diagonal matrices with diagonal entries $(P_i)_{j,j} \triangleq e^{-rf_i(j)}$ for $j \in [n]$. Thus, our goal is to bound the moment generating function
$\E[e^{rX}]$.

Similar to the analysis presented in Section~\ref{sec:boundmgf}, we need to understand the effect of the $P_i$ operators.

\begin{lemma}\label{lem:ndop}Let $M$ be an ergodic Markov chain with state space $[n]$ and stationary distribution $\pi$. Let $f: [n] \rightarrow [0, 1]$ be a weight function with $\E_{v \leftarrow \pi}[f(v)] = \mu$. Let $P$ be a diagonal matrix with diagonal entries $P_{j, j} \triangleq e^{-rf(j)}$ for $j \in [n]$, where $r$ is a parameter satisfying $0 \leq r \leq 1/2$. We have
\begin{itemize}
\item $\|(\pi P)^{\parallel}\|_{\pi} \leq 1 - r \mu + \frac{r^2}{2}\mu.$
\item $\|(\pi P)^{\bot}\|_{\pi} \leq \sqrt 2 r \sqrt{\mu}$
\item For every vector $y \bot \pi$, $\|(yP)^{\parallel}\|_{\pi} \leq r \sqrt{\mu}\|y\|_{\pi}$.
\item For every vector $y \bot \pi$, $\|(yP)^{\bot}\|_{\pi} \leq \|y\|_{\pi}$
\end{itemize}
\end{lemma}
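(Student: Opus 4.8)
The plan is to mirror the proof of Lemma~\ref{lem:P-operator} exactly, replacing $e^{rf(j)}$ by $e^{-rf(j)}$ throughout and using the Taylor-type estimate $1-x\leq e^{-x}\leq 1-x+\tfrac{x^2}{2}$ for $x\in[0,1/2]$ (valid since $0\leq rf(j)\leq r\leq 1/2$) in place of the bounds $1+x\leq e^x\leq 1+x+x^2$ and $e^{rx}\leq 1+(e^r-1)x$ used there. First I would handle item~1: by definition $\|(\pi P)^{\parallel}\|_\pi=\langle\pi P,\pi\rangle_\pi=\sum_i e^{-rf(i)}\pi_i$, and applying $e^{-rf(i)}\leq 1-rf(i)+\tfrac{r^2}{2}f(i)^2\leq 1-rf(i)+\tfrac{r^2}{2}f(i)$ (using $f(i)^2\leq f(i)$ since $f(i)\in[0,1]$) and summing against $\pi$ with $\sum_i f(i)\pi_i=\mu$ gives $\|(\pi P)^{\parallel}\|_\pi\leq 1-r\mu+\tfrac{r^2}{2}\mu$.

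For item~2, by the Pythagorean theorem $\|(\pi P)^{\bot}\|_\pi^2=\|\pi P\|_\pi^2-\|(\pi P)^{\parallel}\|_\pi^2=\sum_i e^{-2rf(i)}\pi_i-\bigl(\sum_i e^{-rf(i)}\pi_i\bigr)^2$. I would upper bound the first sum using $e^{-2rf(i)}\leq 1-2rf(i)+2r^2f(i)^2\leq 1-2rf(i)+2r^2f(i)$ (note $2rf(i)\leq 1$), and lower bound the second using $e^{-rf(i)}\geq 1-rf(i)$, so that $\bigl(\sum_i e^{-rf(i)}\pi_i\bigr)^2\geq(1-r\mu)^2=1-2r\mu+r^2\mu^2$; subtracting yields $\|(\pi P)^{\bot}\|_\pi^2\leq 2r^2\mu-r^2\mu^2\leq 2r^2\mu$, hence $\|(\pi P)^{\bot}\|_\pi\leq\sqrt{2}\,r\sqrt{\mu}$. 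For item~3, exactly as before, $\|(yP)^{\parallel}\|_\pi=\langle y,\pi(P-I)\rangle_\pi\leq\|y\|_\pi\|\pi(P-I)\|_\pi$ by Cauchy--Schwarz (using $\langle y,\pi\rangle_\pi=0$), and $\|\pi(P-I)\|_\pi^2=\sum_i(e^{-rf(i)}-1)^2\pi_i\leq\sum_i r^2f(i)^2\pi_i\leq r^2\mu$ since $|e^{-rf(i)}-1|\leq rf(i)$ for $rf(i)\geq 0$; thus $\|(yP)^{\parallel}\|_\pi\leq r\sqrt{\mu}\,\|y\|_\pi$ — note this is a factor $2$ better than item~3 of Lemma~\ref{lem:P-operator}, because $|e^{-x}-1|\leq x$ is sharper than $e^x-1\leq 2x$. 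Item~4 is immediate: $\|(yP)^{\bot}\|_\pi^2\leq\|yP\|_\pi^2=\sum_i y_i^2 e^{-2rf(i)}/\pi_i\leq\sum_i y_i^2/\pi_i=\|y\|_\pi^2$, since $e^{-2rf(i)}\leq 1$.

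I do not expect any genuine obstacle here; every step is a routine sign-flip of the corresponding estimate in Lemma~\ref{lem:P-operator}. The only points requiring a little care are (i) making sure the quadratic remainder in $e^{-x}\leq 1-x+x^2/2$ is used with the right constant so that item~1 comes out as stated (rather than a weaker $1-r\mu+r^2\mu$), which forces the use of $f(i)^2\leq f(i)$ at the level of each term rather than after summing, and (ii) checking the parameter ranges ($rf(i)\leq 1/2$, $2rf(i)\leq 1$) so the Taylor bounds apply. Once Lemma~\ref{lem:ndop} is in hand, the recursive argument of Section~\ref{sec:boundmgf} runs verbatim: with $z_0=\varphi$, $z_i=z_{i-1}P_iM$, one gets $\|z_i^{\parallel}\|_\pi\leq(1-r\mu+\tfrac{r^2}{2}\mu)\|z_{i-1}^{\parallel}\|_\pi+r\sqrt{\mu}\,\|z_{i-1}^{\bot}\|_\pi$ and $\|z_i^{\bot}\|_\pi\leq r\lambda\sqrt{\mu}\,\|z_{i-1}^{\parallel}\|_\pi+\sqrt{\lambda}\,\|z_{i-1}^{\bot}\|_\pi$ (for $r\leq\tfrac12\log(1/\lambda)$), and solving the recurrence exactly as for $\alpha_i,\beta_i$ gives $\E[e^{-rX}]\leq c\max\{1,O(r\sqrt{\mu}/(1-\lambda))\}\|\varphi\|_\pi\exp\{(-r+O(r^2/(1-\lambda)))\mu t\}$; choosing $r=(1-\lambda)\delta/18$ and comparing with $e^{-r(1-\delta)\mu t}$ yields the claimed bound $c\|\varphi\|_\pi\exp(-\delta^2(1-\lambda)\mu t/36)$, completing the proof of Claim~\ref{thm:mainmultiplicativebound2}.
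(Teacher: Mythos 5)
Your proof is correct and follows essentially the same route as the paper's: the same Pythagorean/Cauchy--Schwarz decompositions, the same Taylor estimates $1-u\leq e^{-u}\leq 1-u+u^2/2$ combined with $f(i)^2\leq f(i)$, term by term. (The only nitpick is in your downstream sketch, not the lemma itself: the coefficient in the $\beta$-recursion coming from item~2 should be $\sqrt{2}\,r\lambda\sqrt{\mu}$ rather than $r\lambda\sqrt{\mu}$, which does not affect the final bound.)
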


\begin{proof}
For the first item, we have
\begin{eqnarray*}
\|(\pi P)^{\parallel}\|_{\pi} & = & \sum_{i \leq n}e^{-r f(i)}\pi_i \\
& \leq & \sum_{i \leq n}(1-r f(i) + \frac{r^2}{2}f(i))\pi_i  \\
& \leq & 1 - r \mu + \frac{r^2}{2}\mu
\end{eqnarray*}
The first inequality holds because  $e^{-rx} \leq 1 - rx + r^2x/2$ for $0 \leq x \leq 1$.

(2). we may use Pythagorean theorem and get
\begin{eqnarray*}
\|(\pi P)^{\bot}\|^2_{\pi} & = & \|(\pi P)\|^2_{\pi} - \|(\pi P)^{\parallel}\|^2_{\pi} \\
& = & \sum_{i \leq n}e^{-2r f(i)}\pi_i - \left(\sum_{i \leq n}e^{-r f(i)}\pi_i\right)^2 \\
& \leq & \sum_{i \leq n}\left(1 - 2rf(i) + 2r^2f^2(i)\right)\pi_i
- \left(\sum_{i \leq n}(1-rf(i))\pi_i\right)^2 \\
& = &2r^2 \mu - r^2 \mu^2 \\
& \leq &2r^2 \mu.
\end{eqnarray*}
This implies $\|(\pi P)^{\bot}\|_{\pi} \leq \sqrt 2 r \sqrt{\mu}$.

(3). First, since $y \bot \pi$, we have $\langle y, \pi \rangle_{\pi} = 0$.
Next notice that by Cauchy Schwarz inequality,
$$\|(yP)^{\parallel}\|_{\pi} = \langle y, \pi P \rangle_{\pi} - \langle y, \pi I\rangle_{\pi} =
\langle y, \pi (P - I)\rangle_{\pi} \leq \|y\|_{\pi} \|\pi(P - I)\|_{\pi}.$$
We next bound $\|\pi (P - I)\|_{\pi}$. Specifically,
\begin{eqnarray*}
\|\pi(P - I)\|^2_{\pi} & = & \sum_{i \leq n}(e^{-rf(i)} - 1)^2\pi_i \\
& = & \sum_{i \leq n}(1-e^{-rf(i)})^2 \pi_i \\
& \leq & \sum_{i \leq n}(r f(i))^2 \pi_i \\
& \leq & r^2 \sum_{i \leq} f(i) \pi_i \\
& \leq & r^2 \mu.
\end{eqnarray*}
Therefore, $\|(yP)^{\parallel}\|_{\pi} \leq r \sqrt \mu \|y\|_{\pi}$.

(4). We have $\|(yP)^{\bot}\|_{\pi} \leq \|(yP)\|_{\pi} \leq \|y\|_{\pi}$.
\end{proof}

Now we proceed to prove Claim~\ref{thm:mainmultiplicativebound2} using Lemma~\ref{lem:ndop}.

\begin{proof} (of Claim~\ref{thm:mainmultiplicativebound2}). 
Let us recall that $z_0 \triangleq x$ and $z_i = z_{i - 1}P_iM$ for $i \in [t]$. Lemma~\ref{lem:ndop} gives us
$$\|z^{\parallel}_{i}\|_{\pi} \leq (1 - r \mu + \frac{r^2}{2}\mu) \|z^{\parallel_{i - 1}}\|_{\pi} + r\sqrt{\mu}\|z^{\bot}_{i - 1}\|_{\pi}$$
and
$$\|z^{\bot}_i\|_{\pi} \leq  \sqrt 2 \lambda r \sqrt \mu \| z^{\parallel}_{i - 1}\|_{\pi}+ \lambda \|z^{\bot}_{i - 1}\|_{\pi}$$
Following our strategy presented in Section~\ref{sec:boundmgf}, let $\alpha_0 = \|z^{\parallel}_0\|_{\pi} = 1$ and $\beta_0 = \|z^{\bot}_0\|_{\pi}$ and define for each $i \in [t]$,
\begin{equation}\label{eqn:ndalpha}
\alpha_i = (1-r\mu + \mu r^2/2)\alpha_{i - 1} + r \sqrt{\mu}\beta_{i - 1}
\end{equation}
and
\begin{equation}\label{eqn:ndbeta}
\beta_i = (\sqrt 2r \lambda\sqrt{\mu}) \alpha_{i - 1} + \lambda \beta_{i - 1}.
\end{equation}
We can inductively show that $\|z^{\parallel}_i\|_{\pi} \leq \alpha_i$ and $\|z^{\bot}_i\|_{\pi} \leq \beta_i$ for each $i \in [t]$.

Our goal becomes to give an upper bound for $\alpha_i$ and $\beta_i$. Also, we shall set
 $r = \min\{1/2,\log(1/\lambda)/2, 1-\sqrt \lambda, (1-\lambda)\delta/8\}$ throughout our analysis.
Next, we recursively substitute the value of $\beta_i$ from Eq.(\ref{eqn:ndbeta}) into Eq.(\ref{eqn:ndalpha}) and yield,
\begin{equation}\label{eqn:ndalphabnd}
\alpha_i = (1-(r-r^2/2)\mu)\alpha_{i - 1} + \sqrt{2}r^2 \mu  \lambda \alpha_{i - 2} + ... + \sqrt 2 r^2  \mu \lambda^{i - 1}\alpha_0 + r \sqrt{\mu}\lambda^{i - 1}\beta_0
\end{equation}
Using the fact that $r \leq 1 - \sqrt \lambda$ and thus $\alpha_{i} \leq (1-(r-r^2/2)\mu)\alpha_{i - 1}$ for all $i \geq 1$, we may conclude $\sqrt \lambda \alpha_{i - 1} \leq \alpha_i$. Now (\ref{eqn:ndalphabnd}) becomes
\begin{equation}
\alpha \leq \left(1-(r-r^2/2)\mu +\sqrt 2 r^2 \sqrt{\mu} \left(\sum_{j = 1}^{i - 1}\sqrt{\mu} \sqrt{\lambda^{i - j}}\right) \right)\alpha_{i - 1} + r\sqrt{\mu}\lambda^{i - 1}\beta_0.
\end{equation}
Next, let us define $A_i$ as follows,
$$
A_i \triangleq \left(1 - (r-r^2/2)\mu + \sqrt 2 r^2 \sqrt{\mu}\left(\sum_{j = 0}^{i - 1}\sqrt{\mu} \sqrt{\lambda^{i - j}}\right)\right). $$
We then have
$$\alpha_i \leq A_i \alpha_{i - 1} + r\sqrt{\mu}\lambda^{i - 1}\beta_0.$$
Therefore, we can see that
$$\alpha_t \leq \left(\prod_{i \leq t}A_i\right)\left(1+\beta_0\frac{r\sqrt{\mu}}{1-\lambda}\right).
$$
On the other hand, we can see that
\begin{eqnarray*}
\left(\prod_{1\leq i \leq t}A_i\right) & \leq & \exp\left\{\sum_{i\leq t} \left(
-(r-r^2/2)\mu\right) + \sum_{1\leq i \leq t} \sqrt 2 r^2\sqrt{\mu}\left(\sum_{1 \leq j \leq t-1}\sqrt{\lambda^{i - j} \mu}\right)\right\} \\
& \leq & \exp\left\{-(r-r^2/2) \mu t + \frac{2\sqrt 2 r^2}{1-\lambda}\mu t\right\} \\
& = & \exp\left\{-r\mu t + \left(\frac{r^2}{2} + \frac{2\sqrt 2 r^2}{1-\lambda}\right)\mu t\right\} \\
& \leq & \exp\left\{-r\mu t + \left(\frac{4r^2}{1-\lambda} \mu t\right)\right\}\\
\end{eqnarray*}
Notice that $1+\beta_0\frac{r\sqrt{\mu}}{1-\lambda} = O\left(\frac{r\|x\|_{\pi}}{1-\lambda}\right)$.
By using the fact $r = \min\{1/2,\log(1/\lambda)/2, 1-\sqrt \lambda, (1-\lambda)\delta/8\}$, we complete the proof.
\end{proof}

\section{Continuous Time Case} \label{sec:continuous time appendix}
This section proves Theorem~\ref{thm:continousbound}.

\begin{proof} (of Theorem~\ref{thm:continousbound}). We mimic the strategy from Lezaud~\cite{Lezaud04} to discretize the chain in $b$ time units, i.e. consider the states $v_{ib}$ for $i = 0, 1, ..., t/b$. The stationary distribution of this discretized chain $v_{ib}$ is the same as the original continuous time chain, and hence $\mu = \E_{\pi}f_t(v_t) = \E_{\pi}f_{ib}(v_{ib})$. Now by Theorem~\ref{thm:mixdeviation} we have
\begin{eqnarray*}
\mbox{1. } \Pr\left[\sum_{i = 1}^{t/b}f_{ib}(v_{ib})\geq (1+\delta)\frac{\delta\mu t} b\right] & \leq &
\begin{cases}
c \|\varphi\|_\pi\exp\left(-\delta^2 \mu (t/b) / (72T/b)\right) & \mbox{ for $0 \leq \delta \leq 1$} \\
c\|\varphi\|_\pi\exp\left(-\delta \mu (t/b)/(72T/b)\right) & \mbox{ for $\delta > 1$}
\end{cases}
\\
\mbox{2. }\Pr\left[\sum_{i = 1}^{t/b}f_{ib}(v_{ib})\leq (1-\delta)\frac{\delta\mu t} b\right] & \leq & c \|\varphi\|_\pi \exp\left(-\delta^2\mu (t/b)/ (72T/b)\right) \quad  \quad \mbox{for $0 \leq \delta \leq 1$}
\end{eqnarray*}
Notice that the mixing time for the discretized chain is $T/b$ while the total number of steps here is $t/b$. In the exponents, the term $b$ appears in both the numerator and the denominator and they cancel with each other. Taking limit as $b\rightarrow 0$ completes the proof~\cite{Lezaud04}.
\end{proof}

\end{document}